\renewcommand{\epsilon}{\varepsilon}
\newcommand{\RR}{\mathbb{R}}
\newcommand{\NN}{\mathbb{N}}
\newcommand{\sC}{\mathscr{C}}
\newcommand{\sG}{\mathscr{G}}
\newcommand{\sL}{\mathscr{L}}
\newcommand{\sR}{\mathscr{R}}
\newcommand{\sW}{\mathscr{W}}
\newcommand{\Xp}{{\mathfrak{X}^p}}
\newcommand{\rd}{\mathrm{d}}
\newtheorem{theorem}{Theorem}
\newtheorem{lemma}{Lemma}
\begin{document}

\begin{center}
{\bf\Large Compactness and stability for planar vortex-pairs with
prescribed impulse}\\
\bigskip
{\bf\large G. R. Burton}\\
{\em Department of Mathematical Sciences, University of Bath, Bath BA2 7AY, U.K.}\\
\end{center}

\bigskip

\noindent
{\sc Abstract}.-
Concentration-compactness is used to prove compactness of maximising sequences
for a variational problem governing symmetric steady vortex-pairs in a
uniform planar ideal fluid flow, where the kinetic energy is to be maximised
and the constraint set comprises the set of all equimeasurable rearrangements
of a given function (representing vorticity) that have a prescribed impulse
(linear momentum).
A form of orbital stability is deduced.

\bigskip

\noindent
{\sc R\'{e}sum\'{e}}.-
On utilise la methode de compacit\'{e} par concentration pour d\'{e}monstrer
la compacit\'{e} des s\'{e}quences de maximisation pour un probl\`{e}me
variationnel d\'{e}scribant des pairs-tourbillons symm\'{e}triques dans
un \'{e}coulement fluide planaire uniforme, o\`{u} on maximise l'en\'{e}rgie
cin\'{e}tique dans l'ensemble des r\'{e}arrangements mesurables
d'un fonction (repr\'{e}sentant le tourbillon) aux impulsion prescrite
(moment lin\'{e}aire). On en d\'{e}duit une forme de stabilit\'{e} orbital.

\bigskip

\noindent
{\em Key words:} variational problem, rearrangements, vortex pairs, convex set,
stability, concentration-compactness, weak limit.

\bigskip

\section{Introduction.}\label{S1}
In this paper we prove compactness (up to translation)
of all maximising sequences for a constrained variational problem whose
maximisers represent steady axisymmetric vortex-pairs in a planar flow
of an ideal fluid of unit density that approaches a uniform flow at infinity.
The quantity being maximised is the kinetic energy $E(\zeta)$ due to
the vorticity $\zeta$ and
the constraints are that the vorticity in the upper half-plane should be a
rearrangement of a given compactly supported non-negative function and that
the impulse (linear momentum parallel to the axis) $I(\zeta)$ should take a
prescribed value.
This formulation of steady vortex pairs,
which is derived from ideas of
Arnol$^\prime$d \cite{VIA:COND} and
Benjamin \cite{TBB},
has the notable feature that all the prescribed data are
conserved quantities in the corresponding dynamical problem.

We apply the compactness theorem to orbital stability:
we prove that symmetric flows with non-negative upper half-plane vorticity
starting close to a maximiser remain close to the set of maximisers
for all time, relative to a norm defined by
\[
\| \zeta  \|_\Xp := |I(\zeta)| + \| \zeta \|_1 + \| \zeta \|_p ,
\]
where $p>2$.
It must be emphasised that this result does not preclude arbitrarily small
perturbations that result in two-signed initial vorticity leading to flows
that become distant from the set of maximisers.
This stability result is a counterpart for one proved in \cite{BLL}
where kinetic energy penalised by a given constant multiple of impulse was
maximised relative to a set of rearrangements.

In the present paper the same norm $\| \cdot \|_\Xp$ is used to
measure deviations in both the initial state and the evolved state,
whereas in \cite{BLL} deviations in the evolved state were
measured in a weaker norm than deviations in the initial state.
The precise formulations of the results are given
in Subsection \ref{results}; the sequences considered in the compactness
theorem are in fact slightly more general than maximising sequences.

While it is clear that all maximisers of the variational problem of \cite{BLL}
are maximisers of the problem of the present paper (for appropriate fixed
values of the impulse) it is unclear whether the converse holds.
In particular, we do not know generally whether the maximisers for either
formulation are unique (up to axial translation), or even whether all
maximisers for the penalised energy formulation in \cite{BLL}
have the same impulse.

Stability of planar ideal fluid flows has been the subject of many
investigations.
The outcome can depend on which norm is used in the definition of stability,
especially the order of derivatives included, if any, a point that is
emphasised in the expository articles by
Friedlander and Shnirelman \cite{FrShn} and
Friedlander and Yudovich \cite{FrYud}.

\subsection{Formulation.}
\label{formul}

We take the flow to be symmetric in the $x_1$-axis of $\RR^2$, so we work
in the upper half-plane $\Pi=\{ (x_1,x_2) \in \RR^2) \mid x_2>0 \}$
and let  $G$ be the Green's function of $-\Delta$ in $\Pi$, that is
\begin{equation}
G(x,y) := \frac{1}{2\pi} \log \left (\frac{|x-\overline{y}|}{|x-y|} \right)
= \frac{1}{4\pi} \log \left( 1+ \frac{4x_2 y_2}{|x-y|^2} \right)
\label{eqx6}
\end{equation}
where $\overline{y} = (y_1,-y_2)$ denotes the reflection in the $x_1$-axis
of $y=(y_1,y_2)$.
An operator $\sG$ is defined by
\[
\sG\zeta(x)= \int_\Pi G(x,y) \zeta(y) \rd y,
\]
when this integral converges (for which it is sufficient that
$\zeta \in L^1(\Pi) \cap L^p(\Pi)$ for some $p>1$).
For a flow of an ideal (incompressible, inviscid) fluid of unit density
in $\Pi$ approaching a uniform stream with velocity $(\lambda,0)$ at infinity,
parallel to $e_1=(1,0)$ on the axis and having signed scalar vorticity
$\zeta(x)$, the stream function is given by $\sG\zeta(x) - \lambda x_2$ and
the kinetic energy $E(\zeta)$ and impulse $I(\zeta)$ are given by
\begin{eqnarray*}
E(\zeta) &=& \frac{1}{2}\int_\Pi \zeta(x) \sG\zeta(x) \rd x
= \frac{1}{2} \int_\Pi G(x,y) \zeta(x) \zeta(y) \rd x \rd y ,\\
I(\zeta) &=&  \int_\Pi \zeta(y)y_2 \rd y .
\end{eqnarray*}
Note that $E(\zeta)>0$ if $\zeta \neq 0$.

Consider a fixed compactly supported non-negative $\zeta_0 \in L^p(\Pi)$,
for some $2<p<\infty$ and let $\sR(\zeta_0)$ denote the set of all
rearrangements of $\zeta_0$ on $\Pi$, defined in Sect. \ref{rearr} below.
Given a number $i_0>0$, any maximiser of $E(\zeta)$ subject to the constraints
$\zeta \in \sR(\zeta_0)$ and $I(\zeta)=i_0$ satisfies an equation
\begin{equation}
\label{eq5}
-\Delta \psi(x) = \varphi (\psi(x) - \lambda x_2) \quad \mbox{ in } \Pi
\end{equation}
where $\psi := K\zeta$ and $\varphi$ is an ({\em a priori} unknown) increasing
function; if $\psi$ satisfies \eqref{eq5} then $\psi(x_1,x_2)- \lambda x_2$
is the 
stream function of a steady ideal fluid flow.
Viewed in a frame fixed relative to the fluid at infinity,
$\overline{\psi}(x_1,x_2,t):=\psi(x_1 + \lambda t,x_2)$, where $\psi$ satisfies
\eqref{eq5}, is the stream function for a vortex of constant form moving
with velocity $(\lambda,0)$ in an otherwise irrotational fluid that is
stationary at infinity.

Our purpose here is to prove that given $\zeta_0$, for all sufficiently large
$i_0$ the maximisers are orbitally stable to non-negative perturbations of
vorticity in $\| \, \|_\Xp$ in the sense that, if $\zeta$ is a maximiser
and $\omega(t)$ is a flow of a planar ideal fluid whose initial vorticity
$\omega(0)$ is non-negative and is close to $\zeta$ in $\| \,\|_\Xp$ then
$\omega(t)$ remains close in $\| \,\|_\Xp$ to the set of maximisers.

We work with a relaxed formulation of the variational problem,
having a convex constraint set, and refer to its solutions as
{\em relaxed maximisers}; we show these must always exist.
It transpires (see Lemma \ref{lmx7}) that, given $\zeta_0$,
for all sufficiently large $i_0$ the relaxed maximisers 
are in fact solutions to the original unrelaxed problem;
it is under these circumstances that we can prove orbital stability.

\subsection{Background.}

The notion that extrema of kinetic energy relative to a set of equimeasurable
vorticities, or ``equivortical surface'', should provide stable steady flows,
was introduced by Arnol$^\prime$d \cite{VIA:COND}.
Benjamin \cite{TBB} adapted Arnol$^\prime$d's ideas to the context of steady
axisymmetric vortex-rings in a uniform flow in the whole of $\RR^3$ and
proposed a programme for proving the existence of energy maximisers subject
to the additional constraint of fixed impulse and for proving their
stability from this.
The problem considered in the present paper is the two-dimensional analogue
of Benjamin's problem and this work represents a contribution to its stability
theory, existence of solutions having been studied previously in \cite{GRB:VP}.

Arnol$^\prime$d's stability method proceeds by constructing a Lyapunov
functional, derived from the vorticity-stream function relationship,
concerning which some degree of detail must therefore be known.
However, Benjamin's approach to steady vortices has the novel feature that the
functional relationship between the stream function and the vorticity, which
is the classical condition for a planar flow to be steady, is not specified
{\em a priori} but is determined {\em a posteriori}, by contrast with much
other work on existence of solutions to semilinear elliptic equations.
Our result will therefore not be derived by Arnol$^\prime$d's method,
but rather by the approach to stability envisaged by Benjamin \cite{TBB}.

Burton, Lopes and Lopes \cite{BLL} proved a stability theorem for a related
formulation where the energy is penalised by subtracting a fixed multiple of
the impulse and then maximised on a set of rearrangements without any
constraint on the impulse, which is the planar analogue of an alternative
formulation of vortex rings that had also been proposed by Benjamin \cite{TBB}.
In \cite{BLL} it was shown that when the initial vorticity is non-negative
and close to a maximiser in terms of $I$ and $\| \, \|_2$, subject to a
bound on the area of the vortex-core, the vorticity of the evolving flow
remains close in $\| \, \|_2$ to the set of maximisers.
We suggest that the form of stability proved in the present paper is more
elegant, in using the same norm to measure the perturbations of both the
initial state and the evolved state.

\subsection{Methodology.}

The proof of stability in \cite{BLL} proceeded by constructing, given a flow
with vorticity $\omega(t)$ for which $\omega(0)$ is close to a maximiser
$\widehat{\zeta}$, a ``follower'' $\zeta(t)$ in the constraint set by
advecting $\widehat{\zeta}$ using the transport equation of the velocity
field associated with $\omega(t)$; the distance between $\omega(t)$ and
$\zeta(t)$ is constant in time.
The result was then deduced from a compactness theorem for the maximising
sequences of the variational problem, whose proof used the
Concentration-Compactness principle expounded by P.-L. Lions \cite{PLL:CC1}.

In the present context, this approach gives rise to two difficulties.
Firstly, the transport equation need not conserve impulse so its solutions
need not remain in the constraint set; this frustrates the construction
of a follower.
Secondly, the absence of the penalty term on the energy increases
the difficulty of bounding the supports of various vorticities arising in
the ``Dichotomy'' case of Concentration-Compactness.
We therefore prove directly the compactness of maximising sequences that
approach the constraint set but need not be contained within it and, when
addressing Dichotomy, we improve the compactness by solving subsidiary
constrained variational problems, which may have relaxed solutions only.

\section{Preliminaries and Statements of Results.}
\label{prelim}

In this section we review the properties of the set $\sR(\zeta_0)$ of
rearrangements of a function $\zeta_0$ and we define the constraint set
$\sW(\zeta_0,\leq i_0)$ for the relaxed variational problem.
Then we derive some estimates for the stream function in terms of the
vorticity and establish weak continuity properties of the energy, which
will be needed for the study of relaxed problem in Section \ref{exist}.
Throughout we use the notation $D(x,R)=\{ y \in \RR^2 \mid |y-x| < r \}$
and $D_\Pi(x,R) = D(x,R) \cap \Pi$.

\subsection{Rearrangements and Steiner symmetrisation.}
\label{rearr}

Suppose that $f$ and $g$ are non-negative functions $p$-integrable (for some
$1 \leq p <\infty$) on sets of infinite measure in Euclidean spaces (possibly
of different dimensions).
We say $f$ is a {\em rearrangement} of $g$ if the Lebesgue measures of
corresponding super-level sets of $f$ and $g$ are equal, that is, if
\[
\forall \alpha>0 \quad |\{ x  \mid f(x)>\alpha \}|
= |\{ y  \mid g(y)>\alpha \}|
\]
where $| \, |$ denotes Lebesgue measure of appropriate dimension.
We write $f \preceq g$ if
\begin{equation}
\forall \alpha>0 \; \int_U (f(x)-\alpha)_+\rd x \leq
\int_V (g(y) -\alpha)_+ \rd y  .
\label{eqx8}
\end{equation}
Thus $\preceq$ is transitive, while $f$ is a rearrangement of $g$ if and only
if both $f \preceq g$ and $g \preceq f$ hold, since the measures of the
super-level sets of $f$ and $g$ can be recovered by right-differentiation
of the integrals in \eqref{eqx8} with respect to $\alpha$.
Typically we are interested in functions defined on half-planes, planar strips
or unbounded intervals in $\RR$.
Any set of finite positive Lebesgue measure in a Euclidean space
(of any dimension) is measure-theoretically isomorphic to an interval of
equal linear measure in $\RR$,
indeed a measure-preserving bimeasurable bijection to the interval less
a countable set exists, see \cite[Chapter 15]{HLR}, and consequently any
set of infinite measure is isomorphic to a half-line and to $\RR$.
The isomorphisms allow a measurable function on one set to be
lifted to a rearrangement on any other set of equal measure.
We can therefore be somewhat cavalier about domains in what follows.

The essentially unique {\em decreasing rearrangement} $f^\Delta$ of $f$ can be
defined on $(0,\infty)$ and we extend $f^\Delta$ to vanish on $(-\infty,0)$.
We define the {\em increasing rearrangement} of $f$ by
$f^\nabla(s)=f^\Delta(-s)$ for real $s$.
Clearly $f$ is a rearrangement of $g$ if and only if $f^\Delta=g^\Delta$.
The inequality
\begin{equation}
\| f^\Delta - g^\Delta \|_p \leq \| f - g \|_p
\label{eq7}
\end{equation}
holds if $f,g \in L^p(\RR^n)$ for some $1 \leq p \leq \infty$, see for example
\cite[Theorem 3.5]{LILO}, thus decreasing rearrangement is continuous in $L^p$.
One can think of $f^\Delta$ as capturing the statistics of the values of $f$
without capturing their spatial distribution.

Given non-negative $f \in L^p(\Pi)$, for some $1 \leq p \leq \infty$, and
$\beta \in \RR$, the {\em Steiner symmetrisation of $f$ in the line
$x_2=\beta$} is the (essentially unique) non-negative function $f^s$ on $\Pi$
such that, for almost every $x_2>0$, the function $f^s(\cdot,x_2)$ is
symmetric decreasing about $\beta$ and $f^s(\cdot,x_2)$ is a rearrangement
of $f(\cdot,x_2)$.

Consider a non-negative $p$-integrable function $\zeta_0$ (some $2<p<\infty$)
defined on a subset of infinite Lebesgue measure in a Euclidean space and
let $U$ be another subset of infinite measure in a Euclidean space.
We denote by $\sR_U(\zeta_0)$ the set of all rearrangements of $\zeta_0$ on $U$
and define the set $\sW_U(\zeta_0)$, which contains $\sR_U(\zeta_0)$, by
\[
\sW_U(\zeta_0) = \left\{ 0 \leq \zeta \in \left. L^1(U) \,
\right|  \, \zeta \preceq \zeta_0  \right\}  ,
\]
writing in particular $\sW(\zeta_0)=\sW_\Pi(\zeta_0)$ and
$\sR(\zeta_0)=\sR_\Pi(\zeta_0)$.
Then, from the definition, $\sW(\zeta_0)$ is convex and
Douglas \cite{RJD:UD} proved that $\sW(\zeta_0)$ is the closure of
$\sR(\zeta_0)$ in the weak topology of $L^p(\Pi)$. Let
\begin{eqnarray*}
\sW(\zeta_0,\leq i_0) &=&
\{\zeta \in \sW(\zeta_0) \mid I(\zeta) \leq i_0\} \\
\sW(\zeta_0,i_0) &=&
\{\zeta \in \sW(\zeta_0) \mid I(\zeta) = i_0\} \\
\sW(\zeta_0,< \infty) &=&
\{\zeta \in \sW(\zeta_0) \mid I(\zeta) < \infty \} .
\end{eqnarray*}
Then $\sW(\zeta_0,\leq i_0)$ is convex and strongly closed in $L^p$, and
therefore weakly closed, whereas $\sW(\zeta_0,i_0)$ is not strongly closed.
Further if $\zeta \in \sW(\zeta_0)$ then $\|\zeta\|_1 \leq \|\zeta_0\|_1$
and $\|\zeta\|_p \leq \|\zeta_0\|_p$.

We write
\begin{eqnarray*}
\sR^+(\zeta_0) &=& \{ \zeta 1_A \mid \zeta \in \sR(\zeta_0), \; A \subset \Pi
\mbox{ measurable } \}, \\
\sR\sC(\zeta_0) &=& \{ 0 \leq \zeta \in L^1(\Pi) \mid
\zeta^\Delta = \zeta_0^\Delta 1_{(0,\ell)},
\mbox{ some } 0 \leq \ell \leq \infty \}.
\end{eqnarray*}
The elements of $\sR\sC(\zeta_0)$ were called {\em curtailments of
rearrangements} of $\zeta_0$ by Douglas \cite{RJD:UD}, who showed that
$\sR\sC(\zeta_0)$ is the set of extreme points of $\sW(\zeta_0)$.
The ideas described above form counterparts, for domains of infinite measure,
of some results of Ryff \cite{Ryff:Major} for bounded intervals
(and hence, by isomorphism, for more general domains of finite measure).

We will consider the relaxed problem of maximising $E$ relative to
$\sW(\zeta_0,\leq i_0)$ and let $M_0$ denote the supremum of this problem.
It is easy to see from the second form of the Green's function $G$
in \eqref{eqx6}
that translating any nontrivial $\zeta \geq 0$ in the positive $x_2$-direction
strictly increases $E$, so any maximiser of $E$ relative to
$\sW(\zeta_0,\leq i_0)$ must belong to $\sW(\zeta_0,i_0)$.
The strict convexity of $E$ shows that maximisers of $E$ relative to
$\sW(\zeta_0,i_0)$ are extreme points of this set; together with
a result of Douglas \cite{RJD:UD} this will be used to show that all
maximisers belong to the set $\sR\sC(\zeta_0)$. 
This is not quite straightforward because $I$ is an unbounded functional 
so Douglas's theorem does not apply directly to $\sW(\zeta_0,i_0)$.

\subsection{Results.}\label{results}
We prove the following compactness theorem, which is more general than
compactness of maximising sequences:

\begin{theorem}[{\bf Compactness}]
Let $i_0 >0$ and $\zeta_0 \in L^p(\Pi)$, $p>2$, be given, such that
$\zeta_0$ is nontrivial and non-negative with compact support.
Let $\Sigma_0$ be the set of maximisers of $E$ relative to
$\sW(\zeta_0,i_0)$, let $M_0$ be the value of $E$ on $\Sigma_0$ and
suppose $\emptyset \neq \Sigma_0 \subset \sR(\zeta_0)$.
Suppose $(\zeta_n)$ is a non-negative sequence in $L^p \cap L^1 (\Pi)$ such
that $\zeta_n^\Delta \to \zeta_0^\Delta$ in both $L^p$ and $L^1$,
$I(\zeta_n) \to i_0$ and $E(\zeta_n) \to M_0$.
Then some subsequence of $(\zeta_n)$ converges, after $x_1$-translation,
in both $L^p$ and $L^1$ to an element of $\Sigma_0$.
\label{thm1}
\end{theorem}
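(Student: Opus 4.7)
The plan is to apply P.-L.\ Lions' concentration-compactness principle to the non-negative sequence $(\zeta_n)$ on $\Pi$, whose masses $\|\zeta_n\|_1$ converge to $\|\zeta_0\|_1 > 0$ and whose $L^p$-norms are bounded. After passing to a subsequence one is in one of the three standard regimes: compactness up to translation, vanishing, or dichotomy. Vanishing is ruled out directly by a kernel estimate: splitting $G(x,y)$ into a near-diagonal piece controlled by $C \log^+(1/|x-y|)$ and a far-field tail bounded by $x_2 y_2 / |x-y|^2$, the former contributes at most $C\|\zeta_n\|_p \sup_y \|\zeta_n \mathbf{1}_{D_\Pi(y,R)}\|_{1}^{\theta}$ (for a suitable exponent $\theta$, by H\"older), which tends to $0$ under the vanishing hypothesis, while the latter is controlled using the bounded impulse $I(\zeta_n)\to i_0$. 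Thus $E(\zeta_n)\to 0$, contradicting $E(\zeta_n)\to M_0 > 0$.

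Dichotomy is the crux. One obtains a decomposition $\zeta_n = \zeta_n^{(1)} + \zeta_n^{(2)} + r_n$ with $\|r_n\|_1 + \|r_n\|_p \to 0$, supports separating at infinity, and masses tending to $\alpha$ and $\|\zeta_0\|_1-\alpha$ for some $\alpha \in (0,\|\zeta_0\|_1)$. The separation together with the decay of $G$ gives $E(\zeta_n) = E(\zeta_n^{(1)}) + E(\zeta_n^{(2)}) + o(1)$, and along a subsequence $I(\zeta_n^{(j)})\to i_j$ with $i_1+i_2=i_0$, while $(\zeta_n^{(j)})^\Delta$ converges in $L^p \cap L^1$ to the piece $\eta_j^\Delta$ of $\zeta_0^\Delta$ corresponding to the super-level sets captured by the $j$-th component. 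Each limiting piece sits in a relaxed class $\sW(\eta_j,\le i_j)$, so $\lim E(\zeta_n^{(j)})$ is at most the supremum of $E$ over that class. The contradiction comes from the strict subadditivity
\begin{equation*}
\sup_{\sW(\eta_1,\le i_1)} E \; + \; \sup_{\sW(\eta_2,\le i_2)} E \; < \; M_0,
\end{equation*}
which is established by invoking existence of relaxed maximisers for the two subsidiary problems (living possibly only in $\sW$, not in $\sR$) and translating one of them toward the other in the $x_1$-direction: because $G > 0$ in $\Pi \times \Pi$, the cross-interaction energy is strictly positive, so the combined configuration is an admissible competitor for $E$ on $\sW(\zeta_0, \le i_0)$ with strictly larger value than the sum of the two suprema, contradicting $E(\zeta_n)\to M_0$.

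With vanishing and dichotomy excluded, compactness holds up to translation: there exist $y_n \in \Pi$ capturing almost all of the mass of $\zeta_n$ in $D_\Pi(y_n,R)$ for large $R$. The impulse bound precludes $y_n^{(2)} \to \infty$, and the degeneration of $G$ as $x_2, y_2 \to 0$ (combined with the already-used kernel estimate) precludes $y_n^{(2)}\to 0$, since either would force $E(\zeta_n)\to 0$. Translating by $-y_n^{(1)} e_1$, which leaves $\sR(\zeta_0)$, $I$ and $E$ invariant, yields a tight sequence in $L^1(\Pi)$ bounded in $L^p$; extract a weak $L^p$ limit $\zeta^* \in \sW(\zeta_0, i_0)$. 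The weak continuity properties of $E$ to be established in Section~\ref{prelim}, together with tightness, give $E(\zeta^*) = M_0$, and the hypothesis $\Sigma_0 \subset \sR(\zeta_0)$ forces $\zeta^* \in \sR(\zeta_0)$. Finally, $\zeta_n^\Delta \to (\zeta^*)^\Delta$ in $L^p \cap L^1$ together with weak convergence $\zeta_n \rightharpoonup \zeta^*$ delivers $\|\zeta_n\|_p \to \|\zeta^*\|_p$ and $\|\zeta_n\|_1 \to \|\zeta^*\|_1$; uniform convexity of $L^p$ upgrades weak to strong $L^p$ convergence, and tightness combined with norm convergence upgrades weak to strong $L^1$ convergence. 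The main obstacle throughout is the dichotomy step: in the absence of the energy-penalty structure of \cite{BLL}, no a priori bound on the supports of the split pieces is available, and the strict subadditivity must instead be extracted via existence of relaxed solutions to the subsidiary problems.
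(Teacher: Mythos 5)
Your overall architecture matches the paper exactly: concentration--compactness, vanishing excluded by kernel estimates plus the impulse bound, dichotomy treated by solving relaxed subsidiary maximisation problems and recombining, compactness up to translation exploited via tightness, weak continuity of $E$, uniform convexity of $L^p$. The paper also faces and solves the same key obstacle you flag -- no penalty term, hence no a priori support bound for the dichotomy pieces, so one must resort to relaxed subsidiary problems, and indeed the paper's Lemma~\ref{lmx6} is constructed precisely for this purpose.

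However, there is a genuine gap in your dichotomy argument. You claim the strict subadditivity
$\sup_{\sW(\eta_1,\le i_1)} E + \sup_{\sW(\eta_2,\le i_2)} E < M_0$,
and justify it by recombining the two relaxed maximisers with a strictly positive cross-interaction term $\int \widehat{\xi}^1 \sG \widehat{\xi}^2 > 0$. But this cross term is strictly positive only if \emph{both} $\widehat{\xi}^1$ and $\widehat{\xi}^2$ are nonzero, and this can fail: although the $L^1$-masses of the two dichotomy pieces tend to $\alpha>0$ and $\beta-\alpha>0$, the impulse may escape from one of them, so that the relevant impulse budget $i^k = I(\zeta^k)$ can be zero, forcing the corresponding relaxed maximiser over $\sW(\xi^k,\le 0)$ to be identically zero. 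In that case the sum of suprema equals $M_0$ and your inequality is not strict. The paper closes exactly this hole by invoking the hypothesis $\Sigma_0 \subset \sR(\zeta_0)$ already in the dichotomy step: since the recombined configuration is a maximiser, it must lie in $\sR(\zeta_0)$ and hence have $L^1$-mass exactly $\|\zeta_0\|_1=\beta$; but if one piece vanishes, the recombined mass is at most $\max(\alpha,\beta-\alpha) < \beta$ --- contradiction. Your proposal only uses $\Sigma_0\subset\sR(\zeta_0)$ in the final compactness step, whereas it is indispensable in dichotomy as well.

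A smaller imprecision: you assert that $(\zeta_n^{(j)})^\Delta$ converges to ``the piece of $\zeta_0^\Delta$ corresponding to the super-level sets captured by the $j$-th component,'' but dichotomy separates supports spatially, not by level; the identification of the limits is more delicate. The paper handles this by taking the decreasing rearrangement of one piece and the \emph{increasing} rearrangement of the other (so the two limits $\xi^1,\xi^2$ are disjointly supported on $\RR$), passing to the limit via a Helly selection argument, and then using additivity of the integral in the relation $\preceq$ for disjointly supported functions to conclude $\xi^1+\xi^2 \preceq \zeta_0$. This last relation is what makes the recombined configuration admissible in $\sW(\zeta_0,\le i_0)$; without some such construction, the recombination step is unjustified. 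Also, in the compactness step, ruling out $y_n^{(2)}\to 0$ is unnecessary: once $y_n^{(2)}$ is bounded, one can simply replace $y^n$ by its projection to the boundary and enlarge $R$.
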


\noindent
{\bf Definition.}
By an {\em $L^p$-regular solution} of the vorticity equation we mean
$\omega \in L^\infty_\mathrm{loc}([0,\infty)), L^1(\Pi)) \cap
L^\infty_\mathrm{loc}([0,\infty)), L^p(\Pi))$ satisfying, in the sense of
distributions, 
\begin{equation}
\label{eq8}
\begin{cases}
\partial_t \omega + \mathrm{div} (\omega u) = 0,\\
u = \nabla^\perp \sG \omega, \quad (t,x) \in [0,\infty) \times \Pi ,
\end{cases}
\end{equation}
such that $E(\omega(t,\cdot))$ and $I(t,\cdot)$ are constant.
(The operators $\mathrm{div}$, $\nabla^\perp$ and $\mathscr{G}$ are applied
only in the space variable  $x$.)

\medskip

This differs from the corresponding definition in \cite{BLL} in omitting
the $\lambda e_1$ term from the formula for $u$; this change represents viewing
the flow in a frame stationary relative to the fluid at infinity instead of one
moving with velocity $-\lambda e_1$.

For a discussion of the global existence of $L^p$-regular solutions of the
vorticity equation with given initial vorticity in $L^p(\Pi) \cap L^1(\Pi)$
see \cite[Sect. 2]{BLL}.
Note that solutions are not known to be unique, except in the case of compactly
supported initial vorticity in $L^\infty$ studied by Yudovich \cite{VIY:NSF}.

In stating the following theorem, we regard vorticity $\omega$ a function
of time $t$ with values in $L^1(\Pi) \cap L^p(\Pi)$ and suppress the space
variable $x$.

\begin{theorem}[{\bf Stability}]
Let $2<p<\infty$, let $\zeta_0 \in L^1(\Pi) \cap L^p(\Pi)$ be non-negative
and have compact support and let $i_0>0$. 
Suppose $\emptyset \neq \Sigma_0 \subset \sR(\zeta_0)$.
Then for every $\varepsilon>0$ there exists $\delta>0$ such that, if
$\omega(0) \geq 0$ with compact support satisfies
$\mathrm{dist}_\Xp (\omega(0),\Sigma_0)<\delta$,
then
$\mathrm{dist}_\Xp (\omega(t),\Sigma_0)<\varepsilon$
for all $t>0$, whenever $\omega$ is an $L^p$-regular solution of
the vorticity equation with initial vorticity $\omega(0)$.
\label{thm2}
\end{theorem}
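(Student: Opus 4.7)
The plan is to argue by contradiction and reduce the stability claim to the compactness Theorem \ref{thm1} applied to the evolved vorticities $\omega_n(t_n)$. Suppose the conclusion fails: there exist $\varepsilon_0>0$, non-negative compactly supported initial vorticities $\omega_n(0)$ with $\mathrm{dist}_\Xp(\omega_n(0),\Sigma_0)\to 0$, $L^p$-regular solutions $\omega_n$ of \eqref{eq8}, and times $t_n>0$ with $\mathrm{dist}_\Xp(\omega_n(t_n),\Sigma_0)\geq \varepsilon_0$. Choose $\widehat\zeta_n\in\Sigma_0$ with $\|\omega_n(0)-\widehat\zeta_n\|_\Xp\to 0$; because $\Sigma_0\subset\sR(\zeta_0)$, each $\widehat\zeta_n$ is a rearrangement of $\zeta_0$, so $\|\widehat\zeta_n\|_1=\|\zeta_0\|_1$, $\|\widehat\zeta_n\|_p=\|\zeta_0\|_p$, $I(\widehat\zeta_n)=i_0$ and $E(\widehat\zeta_n)=M_0$.

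Next I verify the hypotheses of Theorem \ref{thm1} for the sequence $(\omega_n(t_n))$. The $\Xp$-closeness of $\omega_n(0)$ to $\widehat\zeta_n$ controls the $L^1$-, $L^p$- and $|I|$-differences separately; applying the contractivity inequality \eqref{eq7} to the decreasing rearrangement yields $\omega_n(0)^\Delta\to\zeta_0^\Delta$ in both $L^1$ and $L^p$, and the continuity of $E$ on norm-bounded subsets of $L^1\cap L^p(\Pi)$---a consequence of the stream-function estimates to be developed in Section \ref{prelim}---gives $E(\omega_n(0))\to M_0$. Any $L^p$-regular solution of \eqref{eq8} conserves $E$, $I$, and the distribution function of the vorticity (the last because \eqref{eq8} is a transport equation with a divergence-free velocity field, as treated in \cite[Sect. 2]{BLL}). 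Therefore $\omega_n(t_n)^\Delta=\omega_n(0)^\Delta$ converges to $\zeta_0^\Delta$ in $L^1$ and $L^p$, $I(\omega_n(t_n))\to i_0$, and $E(\omega_n(t_n))\to M_0$.

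Theorem \ref{thm1} then furnishes, after passage to a subsequence, translations $a_n\in\RR$ and $\widehat\zeta\in\Sigma_0$ with $\omega_n(t_n)(\cdot-a_ne_1)\to\widehat\zeta$ in $L^1\cap L^p$. Because $E$, $I$ and the rearrangement class $\sR(\zeta_0)$ are each invariant under translation in $x_1$, so is $\Sigma_0$, and $\widehat\zeta_n^*:=\widehat\zeta(\cdot+a_ne_1)\in\Sigma_0$ for every $n$. Combined with $|I(\omega_n(t_n)-\widehat\zeta_n^*)|=|I(\omega_n(t_n))-i_0|\to 0$, this gives $\mathrm{dist}_\Xp(\omega_n(t_n),\Sigma_0)\to 0$, contradicting the choice of $t_n$. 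Once Theorem \ref{thm1} is in hand, this reduction is essentially routine; the real obstacle is the compactness theorem itself, with subsidiary work needed only for the norm-continuity of $E$ on bounded subsets of $L^1\cap L^p$ (handled by the preliminaries) and for confirming that $L^p$-regular solutions preserve the full distribution function of $\omega$ (inherited from the framework of \cite{BLL}).
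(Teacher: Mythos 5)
Your proposal is correct and follows essentially the same contradiction argument as the paper: conservation of $E$, $I$ and the rearrangement class reduce the claim to the hypotheses of Theorem~\ref{thm1}, whose conclusion (convergence up to $x_1$-translation) combined with the translation invariance of $\Sigma_0$ and $\| \cdot \|_\Xp$ yields the contradiction. The one cosmetic difference is that you invoke conservation of the distribution function of $\omega$ to get $\omega_n(t_n)^\Delta = \omega_n(0)^\Delta$, whereas the paper passes through conservation of $\mathrm{dist}_1(\cdot,\sR(\zeta_0))$ and $\mathrm{dist}_p(\cdot,\sR(\zeta_0))$ and then applies \eqref{eq7}; both rest on the same measure-preserving property of the transport flow treated in \cite[Sect.~2]{BLL}, and the paper's own proof is no more explicit about it than you are.
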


The proofs will be given in Section \ref{S4}.

\subsection{Estimates for the stream function and properties of the energy.}

In Lemmas \ref{lm1}, \ref{lm3} and \ref{lmx1} following we refine the
calculations of \cite[Lemma 1,2,10]{GRB:VP}
to derive estimates for $\sG(\zeta)$ that apply uniformly over
$\zeta \in \sW(\zeta_0, \leq i_0)$ and in Lemmas \ref{lmx5}, \ref{lmx2}
and \ref{lmx4} we establish weak continuity properties of $E$.
Lemmas \ref{lmx8} and \ref{lm2} then show we can work in a strip in $\Pi$.

\noindent
\begin{lemma}
\label{lm1}
Suppose that $1<p<\infty$ and $1<\alpha<\infty$.
Then there are positive constants $c_1,c_2,c_3$, depending only on $p$ and
$\alpha$, such that, if
$0 \leq \zeta \in L^p(\Pi)$ and $I(\zeta)<\infty$, then
\[
\sG\zeta(x) \leq x_2^{-1}(c_1 \log x_2 + c_2) I(\zeta)
+ c_3 x_2^{-1/\alpha} \| \zeta \|_p^{1-1/\alpha} I(\zeta)^{1/\alpha}
\quad \forall x_2 \geq 1.
\]
\end{lemma}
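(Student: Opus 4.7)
The plan is to split $\Pi$ into four regions, depending on the relative sizes of $y_2$ and $x_2$, and of $|x-y|$ and $1$, and to estimate $\sG\zeta(x)$ on each piece. The two terms of the stated bound arise from distinct mechanisms: the $x_2^{-1}(\log x_2)I(\zeta)$-type term will come from regions where either $G$ decays fast in $|x-y|$ or where $\zeta$ can only be controlled via the crude pointwise inequality $\zeta \leq 2y_2\zeta/x_2$, while the $x_2^{-1/\alpha}\|\zeta\|_p^{1-1/\alpha} I(\zeta)^{1/\alpha}$ term will come from a bounded neighbourhood of $x$ in which $G$ has a local logarithmic singularity.

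On the ``tail'' regions $\{y_2 \leq x_2/2\}$ and $\{y_2 \geq 2x_2\}$, the elementary bound $G(x,y) \leq x_2 y_2/(\pi|x-y|^2)$ (obtained from $\log(1+t)\leq t$) combined with the geometric lower bounds $|x-y|\geq x_2/2$ and $|x-y|\geq y_2/2$ respectively gives pointwise estimates which, integrated against $\zeta$, yield $\int G\zeta \leq C I(\zeta)/x_2$ (using $y_2^{-1}\leq y_2/(4x_2^2)$ on the second region to recover the $y_2\zeta$ weight). On the ``distant middle'' $\{x_2/2 < y_2 < 2x_2,\ |x-y| > 1\}$, bounding $4x_2y_2/|x-y|^2 \leq 8x_2^2$ directly inside the logarithm in the definition of $G$ yields $G \leq C + (2\pi)^{-1}\log x_2$ for $x_2 \geq 1$; combined with $\int_{\{y_2 > x_2/2\}} \zeta \leq 2 I(\zeta)/x_2$, this delivers a contribution $C(\log x_2 + 1) I(\zeta)/x_2$, the first term of the stated bound.

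The critical region is $B_1 := \{x_2/2 < y_2 < 2x_2,\ |x-y| \leq 1\}$. For $x_2 \geq 1$ I use the decomposition
\[
G(x,y) \leq \frac{1}{4\pi}\log\frac{16 x_2^2}{|x-y|^2} = c_0 + \frac{1}{2\pi}\log x_2 + \frac{1}{2\pi}\log\frac{1}{|x-y|},
\]
in which the first two pieces are again absorbed into the first term of the bound via $\int_{B_1}\zeta \leq 2 I(\zeta)/x_2$. For the remaining $\log(1/|x-y|)$ piece I apply the identity $\zeta = y_2^{-1/\alpha}(y_2\zeta)^{1/\alpha}\zeta^{1-1/\alpha}$ together with $y_2^{-1/\alpha}\leq (2/x_2)^{1/\alpha}$ on $B_1$, and then invoke three-factor H\"older with exponents $\alpha$, $p\alpha/(\alpha-1)$, and $r_1 := p\alpha/((p-1)(\alpha-1))$, chosen so that $1/\alpha + (\alpha-1)/(p\alpha) + (p-1)(\alpha-1)/(p\alpha) = 1$. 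These produce $I(\zeta)^{1/\alpha}$, $\|\zeta\|_p^{1-1/\alpha}$, and $\|\log(1/|x-y|)\|_{L^{r_1}(D(x,1))}$ respectively; the last is a pure constant since $r_1 < \infty$ makes $\int_{D(x,1)}(\log(1/|x-y|))^{r_1}\,\rd y$ finite.

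The main obstacle is the $B_1$ estimate: a na\"ive pairing $\|G\|_{L^{p'}(D(x,x_2))}\cdot\|\zeta\|_p$ would introduce an unwanted growing factor $x_2^{2/p'}$, and even after substituting $\zeta \leq 2y_2\zeta/x_2$ under a three-factor H\"older the $L^{r_1}$-norm of $G$ on a disk of radius $x_2$ still grows like $x_2^{2/r_1}$. The logarithmic splitting above is precisely what avoids this: it absorbs the $\log x_2$-growth of $G$ into the first term via the bound $\int_{B_1}\zeta \leq 2I(\zeta)/x_2$, leaving only the local, $x_2$-independent singularity $\log(1/|x-y|)$ to be handled by three-factor H\"older. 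Summing the four contributions yields the stated inequality with constants depending only on $p$ and $\alpha$.
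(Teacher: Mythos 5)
Your proof is correct and follows essentially the same strategy as the paper's: you split into far and near regions (using a slightly finer partition), bound $G$ via $\log(1+t)\leq t$ on the far side, absorb the $\log x_2$ growth of $G$ into the $I(\zeta)/x_2$ term on the mid/near regions, and handle the local $\log(1/|x-y|)$ singularity by exactly the same three-factor H\"older inequality with the weight split $\zeta = y_2^{-1/\alpha}(y_2\zeta)^{1/\alpha}\zeta^{1-1/\alpha}$ and the exponent $r_1=p\alpha/((p-1)(\alpha-1))$. The paper uses a two-region split at $\rho=x_2/2$ and relies on $(\log\rho^{-1})_+$ to localise the $L^\gamma$ norm, whereas you split off $|x-y|\le 1$ explicitly, but the key estimates are identical.
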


\noindent
\begin{proof}
We have
\[
\sG\zeta(x) = \left(\int_{\rho<x_2/2} + \int_{\rho>x_2/2}\right)
G(x,y)\zeta(y) \rd y 
\]
where $\rho=|x-y|$.
Now
\begin{eqnarray*}
\int_{\rho>x_2/2} G(x,y)\zeta(y) \rd y &=&
(4\pi)^{-1} \int_{\rho>x_2/2}\log(1+ 4x_2 y_2 \rho^{-2}) \zeta(y) \rd y\\
& \leq & \pi ^{-1} \int_{\rho>x_2/2} x_2 y_2\rho^{-2} \zeta(y) \rd y
\leq  \pi ^{-1} \int_{\rho>x_2/2} 4 x_2^{-1} y_2 \zeta(y) \rd y \\
& \leq &
4\pi^{-1} I(\zeta) x_2^{-1}.
\end{eqnarray*}
For $0<\rho<x_2/2$ we have $x_2/2 < y_2 < 3x_2/2$ so
\begin{eqnarray*}
4\pi G  (x,y) &=& \log(1+4x_2 y_2/\rho^2) < \log (9x_2 y_2/2\rho^2)\\
&=&
\log(9y_2/2) + \log x_2 + 2 \log \rho^{-1}\\
& \leq &
2x_2^{-1}y_2\log(27x_2/4) + 2 x_2^{-1}y_2\log x_2 + 2 \log\rho^{-1}
\end{eqnarray*}
when $x_2 \geq 1$, hence
\[
\int_{\rho<x_2/2} G(x,y)\zeta(y) \rd y  \leq 
(4\pi x_2)^{-1}\left(4\log x_2+ 2\log{\textstyle \frac{27}{4}}\right)I(\zeta)
+ 2\int_{\rho<x_2/2} (\log\rho^{-1})\zeta(y) \rd y .
\]
To treat the last integral in the above inequality, choose
$\beta = p/(1-1/\alpha)$, which ensures that $\beta>p>1$ and that
\[
\frac{1}{\alpha} + \frac{1}{\beta}
= \frac{1}{\alpha} + \frac{1}{p}\left( 1 -\frac{1}{\alpha} \right)
< \frac{1}{\alpha} + \left( 1 -\frac{1}{\alpha} \right) = 1 ,
\]
hence we can choose $\gamma>1$ such that
$\alpha^{-1} + \beta^{-1} + \gamma^{-1} =1$.
Then H\"older's inequality yields
\begin{eqnarray*}
\int_{\rho<x_2/2} (\log\rho^{-1}) \zeta(y)\rd y
&\leq&
\int_{\rho<x_2/2} (\log\rho^{-1})_+ \, y_2^{-1/\alpha}y_2^{1/\alpha}\zeta(y)
\rd y\\
&\leq&
2^{1/\alpha} x_2^{-1/\alpha} \int_{\rho<x_2/2} (\log\rho^{-1})_+
\zeta^{1-1/\alpha}y_2^{1/\alpha} \zeta^{1/\alpha} \rd y\\
&\leq&
2^{1/\alpha} x_2^{-1/\alpha} \| (\log \rho^{-1})_+ \|_\gamma
\| \zeta^{1-1/\alpha} \|_\beta \| y_2^{1/\alpha} \zeta^{1/\alpha} \|_\alpha\\
&\leq&
2^{1/\alpha} x_2^{-1/\alpha} \| (\log \rho^{-1})_+ \|_\gamma
\| \zeta\|_p^{p/\beta} I( \zeta)^{1/\alpha}\\
&\leq&
2^{1/\alpha} x_2^{-1/\alpha} \| (\log \rho^{-1})_+ \|_\gamma
\| \zeta\|_p^{1-1/\alpha} I( \zeta)^{1/\alpha}
\end{eqnarray*}
and we note that 
\[
\| (\log \rho^{-1})_+ \|_\gamma =
\left( \int_0^1 2\pi (\log \rho^{-1})^\gamma \rho \rd  \rho \right)^{1/\gamma}
\]
which is a positive real number depending only on $\alpha$ and $p$ since
$\gamma=\alpha p / ((\alpha -1)(p-1))$.
\end{proof}

\medskip

\begin{lemma}
Suppose that $1<p<\infty$ and $1/p+1/q=1$.
Then there is a positive constant $c_4$, depending only on $p$, such that, if
$0 \leq \zeta \in L^p(\Pi)$, then
\[
\sG\zeta(x) \leq c_4 (\|\zeta\|_1+\|\zeta\|_p) (x_2^{1/q} + x_2^2)
\quad \forall x \in \Pi.
\]
\label{lm3}
\end{lemma}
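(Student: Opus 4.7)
The plan is to derive a pointwise bound on $G(x,y)$ that already captures the right decay as $x_2\to 0$, then to split the integral at $\rho:=|x-y|=1$, applying H\"older in the near-field and a crude $L^1$-estimate in the far-field.

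First I would use the form $G(x,y)=(2\pi)^{-1}\log(|x-\overline{y}|/|x-y|)$ together with the triangle-type inequality $|x-\overline{y}|\leq |x-y|+2x_2$ to obtain
\[
G(x,y)\leq \frac{1}{2\pi}\log\!\left(1+\frac{2x_2}{\rho}\right).
\]
The inequality for $|x-\overline{y}|$ follows by squaring, since $|x-\overline{y}|^2-|x-y|^2=4x_2 y_2$ while $(|x-y|+2x_2)^2-|x-y|^2=4x_2(|x-y|+x_2)\geq 4x_2 y_2$ thanks to $|x-y|\geq y_2-x_2$.

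Now split $\sG\zeta(x)=\int_{\rho<1}+\int_{\rho\geq 1}$. In the near-field I would exploit that $M_q:=\sup_{t>0}\log(1+t)/t^{1/q}$ is finite (the ratio behaves like $t^{1-1/q}\to 0$ at $0$ and $t^{-1/q}\log t\to 0$ at infinity, because $1/q\in(0,1)$), giving $G(x,y)\leq (M_q/2\pi)(2x_2)^{1/q}\rho^{-1/q}$. H\"older's inequality then yields
\[
\int_{\rho<1}G(x,y)\zeta(y)\,\rd y \leq \frac{M_q(2x_2)^{1/q}}{2\pi}\,\|\zeta\|_p\left(\int_{D(x,1)}\rho^{-1}\,\rd y\right)^{1/q}=C_1\, x_2^{1/q}\,\|\zeta\|_p,
\]
with $\int_{D(x,1)}\rho^{-1}\,\rd y=2\pi$ by polar coordinates. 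In the far-field I would apply the trivial $\log(1+t)\leq t$ directly to the original form of $G$, giving $G(x,y)\leq x_2 y_2/(\pi\rho^2)$; combined with $y_2\leq x_2+\rho$ and $\rho\geq 1$ this yields $G(x,y)\leq x_2(x_2+1)/\pi$, so
\[
\int_{\rho\geq 1}G(x,y)\zeta(y)\,\rd y\leq \frac{x_2(x_2+1)}{\pi}\,\|\zeta\|_1.
\]
Summing the two contributions and using $x_2\leq x_2^{1/q}+x_2^2$ for every $x_2>0$ (checked by cases $x_2\leq 1$ and $x_2\geq 1$, exploiting $1/q\leq 1$) gives the claimed inequality with a constant $c_4$ depending only on $p$.

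The main design choice, and the only remotely delicate step, is the exponent $s=1/q$ in the sub-log estimate $\log(1+t)\leq M_q t^s$: it is tuned so that, after raising to the $q$-th power inside H\"older, the resulting kernel $\rho^{-1}$ is exactly borderline locally integrable, and the $x_2$-factor it extracts matches the $x_2^{1/q}$ in the target bound. The $x_2^2$ term is an artefact of the cruder far-field bound $y_2\leq x_2+\rho$, which forces one extra power of $x_2$.
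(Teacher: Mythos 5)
Your proof is correct. You use the same near-field/far-field split at $\rho=1$ and the same far-field estimate $G(x,y)\le x_2 y_2/(\pi\rho^2)$ combined with $y_2\le x_2+\rho$, so the $x_2^2$ contribution and the $\|\zeta\|_1$ factor arise in exactly the same way as in the paper. The near-field treatment, however, is genuinely different and cleaner. The paper first passes to the $|x-\overline y|\le 3x_2$ bound valid only for $\rho<x_2$ (yielding an $x_2^{2/q}\|\zeta\|_p$ term) and then handles the residual annulus $x_2<\rho<1$ separately via $(x_2/\rho)^{1/q}$ interpolation (yielding an $x_2^{1/q}\|\zeta\|_p$ term), so it needs three regions and the final cosmetic step $x_2+x_2^{2/q}\le 2(x_2^{1/q}+x_2^2)$. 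You instead prove the single pointwise kernel bound $G(x,y)\le(2\pi)^{-1}\log(1+2x_2/\rho)$, valid for all $x,y\in\Pi$ (your verification of $|x-\overline y|\le\rho+2x_2$ is correct), and then deploy one scalar inequality $\log(1+t)\le M_q\,t^{1/q}$ to turn it into a power of $\rho$ that H\"older digests in one stroke. This collapses the paper's two near-field cases into one, produces $x_2^{1/q}\|\zeta\|_p$ directly, and makes the tuning of the exponent (so that $\rho^{-1}$ is exactly borderline integrable against $\rd y$) explicit, which is a pedagogically nicer explanation of where the $x_2^{1/q}$ comes from. A minor point: $\int_{D(x,1)}\rho^{-1}\,\rd y=2\pi$ refers to the full disc; over $D_\Pi(x,1)$ it is only an upper bound, but since you are estimating from above this is immaterial.
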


\begin{proof}
Write $\rho=|x-y|$.
Then $y_2 \leq x_2+\rho$ so by \eqref{eqx6} and concavity of $\log$
we have firstly
\[
\int_{\rho > 1} G(x,y)\zeta(y)\rd y
\leq \int_{\rho > 1} \frac{x_2 y_2}{\pi \rho^2} \zeta(y)\rd y
\leq \int_{\rho > 1} \frac{x_2^2 +x_2\rho}{\pi \rho^2} \zeta(y)\rd y
\leq \frac{1}{\pi} (x_2^2+x_2) \| \zeta \|_1 .
\]
Secondly, if $\rho<x_2$ then $|x-\overline{y}| \leq 2x_2 + \rho \leq 3x_2$, so
we have
\[
\int_{\rho<x_2} G(x,y)\zeta(y) \rd y
\leq \frac{1}{2\pi}
\int_{\rho<x_2} \log\left(\frac{3x_2}{\rho}\right)\zeta(y)\rd y
\leq \frac{1}{2\pi}
\left(2\pi x_2^2 \int_0^1 (\log(3s^{-1}))^q \rd s \right)^{1/q} \|\zeta\|_p
\]
by H\"older's inequality.
Thirdly, when $x_2<1$ the region $x_2<\rho<1$ is nonempty
and \eqref{eqx6} yields
\begin{eqnarray*}
\int_{x_2 < \rho <1} G(x,y)\zeta(y)\rd y
&\leq& \int_{x_2 < \rho <1} \frac{x_2(x_2+\rho)}{\pi\rho^2} \zeta(y)\rd y
\leq \int_{x_2 < \rho <1} \frac{2x_2}{\pi\rho} \zeta(y) \rd y \\
&\leq& \frac{2}{\pi} \int_{x_2<\rho<1}
\left(\frac{x_2}{\rho}\right)^{1/q} \zeta(y) \rd y 
\leq \frac{4\|\zeta\|_p}{(2\pi)^{1/p}} x_2^{1/q}
\end{eqnarray*}
by H\"older's inequality.
These three bounds yield the result since
$x_2 + x_2^{2/q} \leq 2(x_2^{1/q}+x_2^2)$.
\end{proof}

\medskip

\noindent
{\bf Remark.} When $2<p<\infty$ we can strengthen Lemma \ref{lm3} as follows:

\medskip

\begin{lemma}
Let $2<p<\infty$ and $1/q+1/p=1$.
Then there is a constant $c_5>0$ depending only on $p$, such that if 
$0 \leq \zeta \in L^p(\Pi) \cap L^1(\Pi)$ then, for $x \in \Pi$
\begin{eqnarray*}
|\nabla_x \sG \zeta(x)| &\leq& c_5 (\|\zeta\|_p+\|\zeta\|_1), \\
\sG \zeta(x) &\leq& c_5 x_2 (\|\zeta\|_p + \|\zeta\|_1) .
\end{eqnarray*}
\label{lmx1}
\end{lemma}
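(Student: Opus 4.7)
The plan is to bound $|\nabla_x G(x,y)|$ pointwise, then integrate against $\zeta$, using the hypothesis $p>2$ (equivalently $q<2$) crucially to control the singularity of the resulting kernel. The bound on $\sG\zeta$ itself will then follow from the gradient bound together with the Dirichlet boundary condition $G(\cdot,y)=0$ on $\partial\Pi$.

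Differentiating the first expression for $G$ in \eqref{eqx6} gives
\[
\nabla_x G(x,y)=\frac{1}{2\pi}\left(\frac{x-\overline{y}}{|x-\overline{y}|^2}-\frac{x-y}{|x-y|^2}\right),
\]
and since $x_2,y_2>0$ imply $|x-\overline{y}|^2=(x_1-y_1)^2+(x_2+y_2)^2\geq|x-y|^2$, I obtain the pointwise estimate $|\nabla_x G(x,y)|\leq 1/(\pi|x-y|)$ valid for all $x,y\in\Pi$. Writing $\rho=|x-y|$, I split
\[
|\nabla_x\sG\zeta(x)|\leq\frac{1}{\pi}\int_{\rho<1}\frac{\zeta(y)}{\rho}\rd y+\frac{1}{\pi}\int_{\rho\geq 1}\frac{\zeta(y)}{\rho}\rd y.
\]
On the far region the integrand is majorised by $\zeta(y)/\pi$, contributing $\|\zeta\|_1/\pi$. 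On the near region I apply H\"older's inequality with exponents $q$ and $p$: this is the step where $p>2$ enters, because only then is $q<2$ and hence
\[
\||x-y|^{-1}\|_{L^q(D(x,1))}^q=2\pi\int_0^1 r^{1-q}\rd r=\frac{2\pi}{2-q}<\infty,
\]
giving a contribution bounded by a constant times $\|\zeta\|_p$. Combining the two pieces yields the first inequality, with some $c_5$ depending only on $p$.

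For the second inequality I exploit the fact that $G(x,y)=0$ whenever $x_2=0$, since then $|x-y|=|x-\overline{y}|$. Lemma \ref{lm3} provides a locally bounded envelope for $G(x,y)\zeta(y)$ as $x_2\downarrow 0$, so by dominated convergence $\sG\zeta(x_1,x_2)\to 0$ as $x_2\downarrow 0$. Then along the vertical segment from $(x_1,0)$ to $x=(x_1,x_2)$ the fundamental theorem of calculus gives
\[
\sG\zeta(x)=\int_0^{x_2}\partial_2\sG\zeta(x_1,s)\rd s,
\]
and inserting the gradient bound (which is independent of the base point) produces $\sG\zeta(x)\leq c_5 x_2(\|\zeta\|_p+\|\zeta\|_1)$, after possibly enlarging $c_5$.

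The only mildly delicate point is the vanishing of $\sG\zeta$ on $\partial\Pi$, which however is immediate from the explicit form of $G$ combined with the $L^1$--$L^p$ envelope supplied by Lemma \ref{lm3}; everything else is a routine split-and-H\"older computation whose sole non-trivial ingredient is the observation that the condition $p>2$ is precisely what makes $|x-y|^{-1}$ locally $L^q$-integrable in two dimensions.
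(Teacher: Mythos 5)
Your proof is correct and follows essentially the same route as the paper: the identical pointwise kernel estimate $|\nabla_x G(x,y)|\leq 1/(\pi|x-y|)$ (using $|x-\overline y|\geq|x-y|$), the same split at $\rho=1$ with H\"older on the near region exploiting $q<2$, and the same deduction of the second bound from the gradient bound together with $\sG\zeta\to 0$ on $\partial\Pi$ via Lemma~\ref{lm3}. The only cosmetic difference is your mention of dominated convergence, which is superfluous since Lemma~\ref{lm3} already supplies a uniform bound tending to zero as $x_2\downarrow 0$.
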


\begin{proof}
Let $0 \leq \zeta \in L^p(\Pi) \cap L^1(\Pi)$.
Then
\begin{eqnarray*}
\int_\Pi |\nabla_x G(x,y) \zeta(y)| \rd y
&=& \frac{1}{2\pi}\int_\Pi \left| \frac{x-y}{|x-y|^2}
- \frac{x-\overline{y}}{|x-\overline{y}|^2} \right| \zeta(y) \rd y \\
&\leq& \frac{1}{2\pi}\int_\Pi \left( \frac{1}{|x-y|}
+ \frac{1}{|x-\overline{y}|} \right) \zeta(y) \rd y \\
&\leq& \frac{1}{\pi}\int_\Pi \frac{1}{|x-y|} \zeta(y) \rd y \\
&=& \frac{1}{\pi}\left( \int_{D_\Pi(x,1)} + \int_{\Pi \setminus D(x,1)} \right)
\frac{1}{|x-y|} \zeta(y) \rd y \\
&\leq& \frac{1}{\pi}
\left( \int_0^1 \frac{2\pi\rho \rd\rho}{\rho^q} \right)^{1/q}
\| \zeta \|_p + \frac{1}{\pi} \| \zeta \|_1 ,
\end{eqnarray*}
hence the first inequality.
The second inequality now follows since $\sG\zeta(x) \to 0$
as $x_2 \to 0$ uniformly over $x_1$ by Lemma \ref{lm3}.
\end{proof}

\begin{lemma}
Let $1<p<\infty$, let $L>0$ and let 
\[
\sL = \{ \xi \in L^1(\Pi) \cap L^p(\Pi) \mid \xi \geq 0, \; I(\xi) \leq L, \;
\| \xi \|_1 \leq L, \; \|\xi\|_p \leq L \}.
\]
Then $E$ is Lipschitz continuous with respect to
$\| \, \|_1$ relative to $\sL$.
\label{lmx5}
\end{lemma}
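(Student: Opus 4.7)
The plan is to deduce Lipschitz continuity from the bilinear identity
\[
2E(\xi) - 2E(\eta) = \int_\Pi (\xi - \eta)(x)\, \sG(\xi + \eta)(x)\, \rd x,
\]
which follows by expanding $\xi(x)\xi(y) - \eta(x)\eta(y) = \xi(x)(\xi(y) - \eta(y)) + \eta(y)(\xi(x) - \eta(x))$ in the definition of $E$, applying Fubini, and using the symmetry $G(x,y) = G(y,x)$. Granted this identity, the lemma reduces to a uniform $L^\infty$ bound on $\sG\zeta$ as $\zeta$ ranges over a slightly enlarged version of $\sL$.

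To establish such a bound, I would split $\Pi$ into $\{x_2 \geq 1\}$ and $\{0 < x_2 \leq 1\}$. For $x_2 \geq 1$, Lemma \ref{lm1} with, say, $\alpha = 2$ gives
\[
\sG\zeta(x) \leq x_2^{-1}(c_1 \log x_2 + c_2) L + c_3 x_2^{-1/2} L,
\]
the right-hand side being bounded on $[1,\infty)$. For $0 < x_2 \leq 1$, Lemma \ref{lm3} yields $\sG\zeta(x) \leq c_4 (\|\zeta\|_1 + \|\zeta\|_p)(x_2^{1/q} + x_2^2) \leq 4 c_4 L$. Combining these, there is a constant $M = M(L, p)$ such that $\sG\zeta(x) \leq M$ on $\Pi$ for every $\zeta \geq 0$ with $I(\zeta) \leq L$, $\|\zeta\|_1 \leq L$, and $\|\zeta\|_p \leq L$.

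For $\xi, \eta \in \sL$, the sum $\xi + \eta$ is non-negative and satisfies the same conditions with $L$ replaced by $2L$, so $\|\sG(\xi+\eta)\|_\infty \leq M(2L, p)$; the identity above then gives
\[
|E(\xi) - E(\eta)| \leq \tfrac{1}{2} M(2L, p)\, \|\xi - \eta\|_1,
\]
which is the required Lipschitz estimate. The only real obstacle is the uniform $L^\infty$ bound itself: neither Lemma \ref{lm1} nor Lemma \ref{lm3} alone controls $\sG\zeta$ across all of $\Pi$, and the case division exploits the complementary roles of these estimates — Lemma \ref{lm1} uses the impulse bound to damp the stream function at large $x_2$, while Lemma \ref{lm3} uses the $L^1$ and $L^p$ bounds to control it near the axis.
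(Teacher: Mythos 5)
Your proposal is correct and follows essentially the same route as the paper: both exploit the bilinear identity $E(\xi)-E(\eta)=\tfrac12\int_\Pi(\xi-\eta)\sG(\xi+\eta)$ and a uniform sup-bound on $\sG\zeta$ over $\sL$, obtained by combining Lemma \ref{lm1} (which handles large $x_2$ via the impulse bound) with Lemma \ref{lm3} (which handles small $x_2$ via the $L^1$ and $L^p$ bounds). The only cosmetic difference is that the paper bounds $\|\sG\xi\|_{\sup}+\|\sG\eta\|_{\sup}$ separately rather than $\|\sG(\xi+\eta)\|_\infty$; the two are equivalent by linearity of $\sG$.
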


\begin{proof}
From Lemmas \ref{lm1} and \ref{lm3} we can choose a constant $K$ such that
$\| \sG\xi \|_\mathrm{sup} \leq K$ for all $\xi \in \sL$.
Consider $\xi,\eta \in \sL$.
We can assume $E(\xi) \geq E(\eta)$ and so
\[
0 \leq E(\xi)-E(\eta)
= \frac{1}{2} \int_\Pi (\xi-\eta)\sG(\xi+\eta) 
\leq \frac{1}{2}\|\xi-\eta\|_1(\|\sG\xi\|_\mathrm{sup}+\|\sG\eta\|_\mathrm{sup})
\leq K\|\xi-\eta\|_1  .
\]
\end{proof}

\begin{lemma}
Let $2<p<\infty$. Then there is a constant $c_6>0$ such that
\[
\sG\zeta(x) \leq
c_6(I(\zeta)+\|\zeta\|_1+\|\zeta\|_p) x_2 \min\{1,| x_1 |^{-1/(2p)}\},
\quad x \in \Pi
\]
for all non-negative $\zeta \in L^p(\Pi) \cap L^1(\Pi)$ that are
Steiner-symmetric in the $x_1$-axis with $I(\zeta)<\infty$.
\label{lmx2}
\end{lemma}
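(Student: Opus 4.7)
The observation $\min\{1,|x_1|^{-1/(2p)}\}=1$ for $|x_1|\le 1$ reduces the task, via Lemma~\ref{lmx1}, to establishing $x_1$-decay on $|x_1|\ge 1$. My plan is to transfer the Steiner symmetry of $\zeta$ to $\psi:=\sG\zeta$, and then to exploit the resulting monotonicity of $\psi(\cdot,x_2)$ in $|x_1|$ together with an explicit evaluation of the horizontal integral of $\psi$.

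For the symmetry step, I will note that for each fixed $(x_2,y_2)\in(0,\infty)^2$ the kernel $G((x_1,x_2),(y_1,y_2))$ depends on $y_1$ only via $(x_1-y_1)^2$ and is an even, decreasing function of $|x_1-y_1|$, while by hypothesis $\zeta(\cdot,y_2)$ is symmetric decreasing about $0$. The $y_1$-slice integral $\int_\RR G((x_1,x_2),(y_1,y_2))\zeta(y_1,y_2)\,\rd y_1$ is then a one-dimensional convolution of two symmetric decreasing $L^1$ functions, which is symmetric decreasing in $x_1$: by the layer-cake decomposition each factor is a superposition of indicators of symmetric intervals, and the convolution of two such indicators is a tent function centred at $0$. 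Integrating over $y_2$ preserves the property, so $\psi(\cdot,x_2)$ is symmetric decreasing in $x_1$ for every $x_2>0$.

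The monotonicity gives $2|x_1|\,\psi(x_1,x_2)\le\int_\RR\psi(s,x_2)\,\rd s$ for all $x_1\neq 0$. I will then evaluate the horizontal integral exactly: after Fubini and the substitution $t=s-y_1$, it reduces to computing $J(x_2,y_2):=\int_\RR G((t,x_2),(0,y_2))\,\rd t$. Using the identity
\[
1+\frac{4x_2 y_2}{t^2+(x_2-y_2)^2}=\frac{t^2+(x_2+y_2)^2}{t^2+(x_2-y_2)^2},
\]
and the elementary evaluation $\int_\RR\log\bigl((t^2+c^2)/(t^2+d^2)\bigr)\,\rd t=2\pi(c-d)$ for $c\ge d\ge 0$ (which I would verify by differentiating in $c$ under the integral), one finds $J(x_2,y_2)=\min(x_2,y_2)\le x_2$. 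Hence $\int_\RR\psi(s,x_2)\,\rd s\le x_2\|\zeta\|_1$ and consequently $\psi(x_1,x_2)\le x_2\|\zeta\|_1/(2|x_1|)$. On $|x_1|\ge 1$ one has $|x_1|^{-1}\le|x_1|^{-1/(2p)}$, and bounding $\|\zeta\|_1\le I(\zeta)+\|\zeta\|_1+\|\zeta\|_p$ yields the stated inequality with a suitable $c_6$; merging with Lemma~\ref{lmx1} on $|x_1|\le 1$ completes the argument.

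The step most requiring care is the monotonicity of $\psi(\cdot,x_2)$ in $|x_1|$. Evenness in $x_1$ is immediate from the reflective symmetry of $G$, but the decreasing property rests on the 1D fact that the convolution of two symmetric decreasing $L^1$ functions is symmetric decreasing, which I would justify via the layer-cake slicing. Once that is in place, the explicit formula $J(x_2,y_2)=\min(x_2,y_2)$ makes the remainder a short calculation, and in fact delivers the stronger factor $|x_1|^{-1}$ which one then weakens to $|x_1|^{-1/(2p)}$ to match the form of the statement.
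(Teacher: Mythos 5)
Your proof is correct, and it takes a genuinely different and arguably cleaner route than the paper. The paper's argument splits $\zeta=\zeta_1+\zeta_2$ at $|y_1-x_1|=|x_1|^{1/2}$, estimates the ``far'' part $\sG\zeta_2$ via the elementary kernel bound and the distance $\rho>|x_1|^{1/2}$ (producing $x_2 I(\zeta)/(\pi|x_1|)$), and estimates the ``near'' part by observing that Steiner symmetry forces $\|\zeta_1\|_1\le|x_1|^{-1/2}\|\zeta\|_1$ and $\|\zeta_1\|_p\le|x_1|^{-1/(2p)}\|\zeta\|_p$, then applying Lemma~\ref{lmx1}. The weaker exponent $-1/(2p)$ arises because that is what the $L^p$ bound on $\zeta_1$ delivers. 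Your argument instead transfers the Steiner symmetry to the stream function $\psi$ itself (via the fact that the convolution of symmetric decreasing $L^1$ functions is symmetric decreasing), applies the trivial Chebyshev-type bound $2|x_1|\,\psi(x_1,x_2)\le\int_\RR\psi(\cdot,x_2)$, and closes with the exact evaluation $\int_\RR G((t,x_2),(0,y_2))\,\rd t=\min(x_2,y_2)$. This yields the stronger decay $|x_1|^{-1}$ (as you note, one then weakens to $|x_1|^{-1/(2p)}$ only to match the statement), and it does not actually require $I(\zeta)$ in the decay bound at all --- only $\|\zeta\|_1$ for $|x_1|\ge1$ and $\|\zeta\|_1+\|\zeta\|_p$ via Lemma~\ref{lmx1} for $|x_1|\le1$. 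The paper's method is more self-contained in that it relies only on the one-line monotonicity estimate $\frac{1}{2b}\int_{s-b}^{s+b}\xi\le\frac{1}{s}\int_0^s\xi$, whereas yours invokes the symmetric-decreasing convolution fact; but your route buys a sharper rate and a more transparent mechanism for where the decay comes from. Both approaches use Steiner symmetry as the essential ingredient; yours just uses it one step later in the chain.
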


\begin{proof}
Firstly note that if $\xi \in L^1(0,\infty)$ is decreasing and $0<b<s$ then
the means of $\xi$ on both of the intervals $[s-b,s]$ and $[s,s+b]$ are
no greater than the mean of $\xi$ on $[0,s]$ and therefore
\[
\frac{1}{2b} \int_{s-b}^{s+b} \xi \leq \frac{1}{s} \int_0^s \xi .
\]

Now consider Steiner-symmetric $\zeta\in L^1(\Pi)$ and $x \in \Pi$.
Let $0<b \leq |x|_1$ and apply the above inequality in the $y_1$ integration 
to obtain
\[
\frac{1}{2b} \int_{|y_1-x_1|<b} \zeta(y)\rd y
\leq \frac{1}{2|x_1|} \int_{|y_1|<|x_1|} \zeta(y)\rd y
\]
and consequently
\begin{equation}
\int_{|y_1-x_1|<b} \zeta(y)\rd y \leq \frac{b}{|x_1|} \int_\Pi \zeta(y)\rd y .
\label{eqx7}
\end{equation}

Suppose that additionally $I(\zeta)<\infty$, fix $x \in \Pi$ with
$x_1 \neq 0$, let
\[
\zeta_1(y) =
\begin{cases}
\zeta(y) & \mbox{ if } |y_1-x_1|  <   |x_1|^{1/2} \\
0        & \mbox{ if } |y_1-x_1| \geq |x_1|^{1/2}
\end{cases}
\]
and let $\zeta_2 = \zeta - \zeta_1$.
We write $\rho=|x-y|$ when $x,y \in \Pi$ and obtain
\begin{eqnarray*}
\sG\zeta_2(x)
&\leq& \frac{1}{4\pi} \int_{\rho>|x_1|^{1/2}}
\log\left(1+\frac{4x_2 y_2}{\rho^2}\right)\zeta_2(y)\rd y
\leq \frac{1}{4\pi} \int_{\rho>|x_1|^{1/2}}
\frac{4x_2 y_2}{\rho^2}\zeta_2(y)\rd y \\
&\leq& \frac{x_2}{\pi|x_1|} \int_\Pi \zeta(y) y_2 \rd y
= \frac{x_2}{\pi|x_1|} I(\zeta) .
\end{eqnarray*}
When $|x_1|>1$ we have from \eqref{eqx7}
\begin{eqnarray*}
\int_\Pi \zeta_1^1 &\leq& \frac{|x_1|^{1/2}}{|x_1|} \int_\Pi \zeta
= |x_1|^{-1/2} \int_\Pi \zeta,  \\
\int_\Pi \zeta_1^p &\leq& \frac{|x_1|^{1/2}}{|x_1|} \int_\Pi \zeta^p
= |x_1|^{-1/2} \int_\Pi \zeta^p
\end{eqnarray*}
so by Lemma \ref{lmx1} we have
\[
\sG\zeta_1(x) \leq c_5 x_2 (\|\zeta_1\|_1 + \|\zeta_1\|_p)
\leq c_5 x_2 (\|\zeta\|_1 |x_1|^{-1/2} + \|\zeta\|_p |x_1|^{-1/(2p)}).
\]
The result follows from the above inequalities for $\sG\zeta_1(x)$ and
$\sG\zeta_2(x)$ when $|x_1|>1$ and from Lemma \ref{lmx1} when $|x_1| \leq 1$.
\end{proof}

\begin{lemma}
Let $2<p<\infty$ and let $(\zeta_n)$ be a sequence of Steiner symmetric
non-negative functions on $\Pi$, suppose $\| \zeta_n \|_1$ and $I(\zeta_n)$
are bounded and suppose $\zeta_n \to \zeta$ weakly in $L^p(\Pi)$.
Then $E(\zeta_n) \to E(\zeta)$ as $n \to \infty$.
\label{lmx4}
\end{lemma}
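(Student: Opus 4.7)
The plan is to prove $E(\zeta_n) \to E(\zeta)$ by splitting the integral $\int_\Pi \zeta_n\sG\zeta_n\,\rd x$ into a bulk piece on a large rectangle $B_R = \{x\in\Pi \mid |x_1|\leq R,\; x_2\leq R\}$ and a tail, using Lemmas~\ref{lm1} and~\ref{lmx2} for uniform tail decay and elliptic regularity for the bulk. First I verify that the limit $\zeta$ inherits the hypotheses: Steiner symmetry in $x_1$ is a closed convex condition, hence weakly closed in $L^p$; $\|\zeta\|_1 \leq \liminf\|\zeta_n\|_1$ and $I(\zeta) \leq \liminf I(\zeta_n)$ follow by testing the weak convergence against compactly supported bounded truncations of $1$ and $y_2$ respectively and then applying monotone convergence. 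Consequently the bounds of Lemmas~\ref{lm1}--\ref{lmx2} apply with uniform constants to $\{\zeta_n\}\cup\{\zeta\}$.

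For the tail, Lemma~\ref{lmx2} yields
\[
\int_{\{|x_1|>R\}} \zeta_n(x)\sG\zeta_n(x)\,\rd x \leq C R^{-1/(2p)} I(\zeta_n),
\]
while Lemma~\ref{lm1} (with any fixed $\alpha>1$) gives $\sup_{x_2>R}\sG\zeta_n(x) \leq \varepsilon(R) \to 0$ as $R\to\infty$, with $\varepsilon(R)$ depending only on the uniform bounds for $I(\zeta_n)$ and $\|\zeta_n\|_p$, so
\[
\int_{\{x_2>R\}} \zeta_n(x)\sG\zeta_n(x)\,\rd x \leq \varepsilon(R)\|\zeta_n\|_1 \to 0
\]
uniformly in $n$, and the same holds with $\zeta$ in place of $\zeta_n$. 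It therefore suffices to establish $\int_{B_R} \zeta_n\sG\zeta_n\,\rd x \to \int_{B_R} \zeta\sG\zeta\,\rd x$ for each fixed $R$.

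On $B_R$, Lemma~\ref{lmx1} gives uniform $L^\infty(B_R)$ and Lipschitz bounds for $\sG\zeta_n$, so by Arzel\`a--Ascoli some subsequence converges uniformly on $B_R$ to a continuous limit $f$; testing the distributional identity $-\Delta\sG\zeta_n = \zeta_n$ against any $\phi\in C_c^\infty(\Pi)\subset L^q$ and using $\zeta_n\rightharpoonup\zeta$ in $L^p$ identifies $f=\sG\zeta$, whereupon a sub-subsequence argument upgrades this to uniform convergence $\sG\zeta_n\to\sG\zeta$ on $B_R$ for the whole sequence. Writing
\[
\int_{B_R} \zeta_n\sG\zeta_n\,\rd x = \int_{B_R} \zeta_n(\sG\zeta_n-\sG\zeta)\,\rd x + \int_{B_R} \zeta_n\sG\zeta\,\rd x,
\]
the first term is at most $\|\zeta_n\|_1\,\|\sG\zeta_n-\sG\zeta\|_{L^\infty(B_R)}\to 0$ and the second converges to $\int_{B_R}\zeta\sG\zeta\,\rd x$ because $\sG\zeta\cdot 1_{B_R}\in L^q(\Pi)$ is a legitimate test function for the weak $L^p$ limit of $\zeta_n$. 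Sending $R\to\infty$ concludes the argument; the main obstacle is securing uniform decay of $\sG\zeta_n$ at infinity in both coordinate directions, which is exactly what Lemma~\ref{lmx2} (decay in $x_1$, requiring Steiner symmetry) and Lemma~\ref{lm1} (decay in $x_2$, requiring only the impulse bound) together provide.
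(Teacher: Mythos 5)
Your overall decomposition (bulk on a box $B_R$ plus a tail controlled by the uniform decay estimates) is the same as the paper's, and your tail estimates via Lemma~\ref{lmx2} (decay in $x_1$, using Steiner symmetry) and Lemma~\ref{lm1} (decay in $x_2$) are correct and essentially identical to what the paper does with its inequality \eqref{eqx4}. Where you diverge is in the bulk term: the paper passes to $E(1_Q\zeta_n)$ and invokes compactness of $\sG$ viewed as an integral operator $L^p(Q)\to L^q(Q)$, which sends the weak convergence $1_Q\zeta_n\rightharpoonup 1_Q\zeta$ into strong convergence of the stream functions and closes the argument in one line. You instead keep the full $\sG\zeta_n$, use the Lipschitz and sup bounds from Lemma~\ref{lmx1} to apply Arzel\`a--Ascoli on $B_R$, and then try to identify the subsequential uniform limit $f$ with $\sG\zeta$.

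The identification step is where there is a genuine gap. Testing $-\Delta\sG\zeta_n=\zeta_n$ against $\phi\in C_c^\infty$ only makes sense for $\phi$ with support inside the region where you control the convergence of $\sG\zeta_n$, namely the interior of $B_R$. What that yields is only that $f-\sG\zeta$ is harmonic in $\operatorname{int}(B_R)$; it does not identify $f$ with $\sG\zeta$ on $B_R$, because you have no information about $f-\sG\zeta$ on $\partial B_R$. And you do need $f=\sG\zeta$ rather than merely $-\Delta f=\zeta$, since your final step passes $\int_{B_R}\zeta_n f\to\int_{B_R}\zeta f$ and you want the right-hand side to equal $\int_{B_R}\zeta\,\sG\zeta$. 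To repair this, either (a) establish pointwise convergence $\sG\zeta_n(x)\to\sG\zeta(x)$ directly for each fixed $x$, by writing $\sG\zeta_n(x)=\int_{B_S}G(x,\cdot)\zeta_n+\int_{\Pi\setminus B_S}G(x,\cdot)\zeta_n$, noting $G(x,\cdot)1_{B_S}\in L^q(\Pi)$ so the first term converges by weak $L^p$ convergence, and bounding the second term by $c\,x_2 S^{-2}I(\zeta_n)$ uniformly in $n$ for $S$ large; combined with the equicontinuity from Lemma~\ref{lmx1} this gives uniform convergence on $B_R$ without any identification issue. Or (b) run Arzel\`a--Ascoli on an exhaustion $B_{R_k}\uparrow\Pi$ with a diagonal argument so that $f$ is defined globally, and then use the boundary decay $\sG\zeta_n(x)\leq c_5 x_2(\cdot)$ together with the decay at infinity from Lemmas~\ref{lm1} and~\ref{lmx2} (all uniform in $n$, hence inherited by $f$) to conclude via a Liouville/maximum-principle argument that the harmonic difference $f-\sG\zeta$, vanishing on $\partial\Pi$ and at infinity, is zero. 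Either repair is straightforward, but the proof as written does not make it.
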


\begin{proof}
Fix $T>0$, to be chosen later, and consider Steiner symmetric
$\xi \in L^1(\Pi) \cap L^p(\Pi)$ with $I(\xi)<\infty$.
Define
\begin{eqnarray*}
\Pi_0 &=& \{ x \in \Pi \mid x_2>T \}, \\
\Pi_1 &=& \{ x \in \Pi \mid x_2<T, \; |x_1|>T \}, \\
\Pi_2 &=& \{ x \in \Pi \mid x_2<T, \; |x_1|<T \} 
\end{eqnarray*}
and let $\xi_k = \xi 1_{\Pi_k}$ for $k=1,2,3$.
Then we have
\begin{equation}
E(\xi)
= \frac{1}{2} \int_\Pi \xi_2 \sG \xi_2 + \int_\Pi \xi_2 \sG(\xi_0 + \xi_1)
+ \frac{1}{2} \int_\Pi (\xi_0+\xi_1) \sG(\xi_0 + \xi_1) .
\label{eqx3}
\end{equation}
There is a positive constant $C$, depending only on $p$, such that
the inequalities
\begin{eqnarray}
\sG\xi(x) &\leq& C(I(\xi) + \|\xi\|_1 + \|\xi\|_p) \label{eq3} \\
\sG\xi(x) &\leq& 
C(I(\xi)+\|\xi\|_1+\|\xi\|_p) x_2\min\{1,|x_1|^{-1/(2p)}\} \label{eq4} 
\end{eqnarray}
hold for all $x \in \Pi$, from Lemmas \ref{lm1} and \ref{lm3}
in the case of \eqref{eq3} and from Lemma \ref{lmx2} in the case of
\eqref{eq4}, the Steiner symmetry being employed only for \eqref{eq4}.
From \eqref{eq3} and \eqref{eq4} we obtain
\begin{eqnarray}
\int_\Pi \xi_2 \sG(\xi_0 + \xi_1)
&+&
\frac{1}{2} \int_\Pi (\xi_0+\xi_1) \sG(\xi_0 + \xi_1)
\leq \int_\Pi \xi_0\sG\xi + \xi_1\sG\xi \notag \\
&\leq&
\|\sG\xi\|_\mathrm{sup} \int_{x_2>T} \xi + \int_{|x_1|>T} \xi\sG\xi \notag \\ 
&\leq&
C(I(\xi)+\|\xi\|_1+\|\xi\|_p)
\left( \int_{x_2>T} T^{-1}x_2\xi(x) \rd x
+ \int_{|x_1|>T} x_2|x_1|^{-1/(2p)}\xi(x)\rd x \right) \notag\\
&\leq&
C(I(\xi)+\|\xi\|_1+\|\xi\|_p)I(\xi)(T^{-1}+T^{-1/(2p)}) .  \label{eqx4}
\end{eqnarray}

Now consider the sequence $(\zeta_n)$, which must be bounded in $L^p(\Pi)$.
Let $\varepsilon>0$, write $Q = (-T,T) \times (0,T)$ and use \eqref{eqx3}
and \eqref{eqx4} to choose $T>0$ such that
\[
0 \leq E(\zeta_n) - E(1_Q \zeta_n) < \varepsilon
\]
for all $n$ and
\[
0 \leq E(\zeta) - E(1_Q \zeta) < \varepsilon.
\]
In view of the compactness of $\sG$ as an operator from $L^p(Q)$ to
$L^q(Q)$ we have
\[
E(1_Q \zeta_n) \to E(1_Q \zeta)
\]
as $n \to \infty$. It follows that
\[
E(\zeta_n) \to E(\zeta).
\]
\end{proof}

\begin{lemma}
Let $1<p<\infty$, let $i_0 >0$ and let $\zeta_0$ be non-negative and have
compact support.
Let $\zeta \in \sW(\zeta_0,\leq i_0)$.
Then $\zeta$ is the weak limit in $L^p$ of a sequence $(\zeta_n)$ in
$\sR^+(\zeta_0)$ having $I(\zeta_n) \leq i_0$ for each $n$.
\label{lmx8}
\end{lemma}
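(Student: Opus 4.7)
The plan has three stages: reduce to the case of compactly supported $\zeta$, strongly approximate $\zeta$ by simple functions lying in $\sW(\zeta_0)$, and weakly approximate each such simple function by elements of $\sR^+(\zeta_0)$ with impulse at most $i_0$.

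First, set $\zeta_R := \zeta\,\mathbf{1}_{D_\Pi(0,R)}$. Since $\{\zeta_R>\alpha\}\subseteq\{\zeta>\alpha\}$ one has $\zeta_R\preceq\zeta\preceq\zeta_0$ and $I(\zeta_R)\le I(\zeta)\le i_0$, while $\zeta_R\to\zeta$ strongly in $L^p$ by dominated convergence; a diagonal argument then reduces the problem to $\zeta$ supported in a compact set $K\subset\RR\times(\delta_0,\infty)$ for some $\delta_0>0$. Enlarge $K$ to a compact $K_0\subset\RR\times(\delta_0,\infty)$ with $|K_0|>|\mathrm{supp}\,\zeta_0|$, partition $K_0$ into dyadic squares $Q_{m,i}$ of side $2^{-m}$, let $c_{m,i}$ be the mean of $\zeta$ on $Q_{m,i}$, and set $\sigma_m=\sum_i c_{m,i}\mathbf{1}_{Q_{m,i}}$. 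Jensen's inequality applied cell-by-cell to the convex function $t\mapsto(t-\alpha)_+$ gives $\int(\sigma_m-\alpha)_+\le\int(\zeta-\alpha)_+\le\int(\zeta_0-\alpha)_+$ for every $\alpha>0$, so $\sigma_m\in\sW(\zeta_0)$; martingale (conditional-expectation) convergence yields $\sigma_m\to\zeta$ in $L^p$, and $I(\sigma_m)\to I(\zeta)\le i_0$ because $x_2$ is bounded on $K_0$.

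For the key stage, fix $m$ and write $\sigma=\sigma_m$. Given $n$, subdivide each $Q_{m,i}$ into sub-cells $R_\alpha$ of side $(n 2^m)^{-1}$, on each of which $\sigma$ is equal to a constant $c_\alpha$. Using $\sigma\preceq\zeta_0$, allocate pairwise disjoint measurable sets $F_\alpha\subset(0,|\mathrm{supp}\,\zeta_0|)$ with $\int_{F_\alpha}\zeta_0^\Delta=c_\alpha|R_\alpha|$ and $|F_\alpha|\le|R_\alpha|$; inside each $R_\alpha$ place a measure-preserving image of $\zeta_0^\Delta|_{F_\alpha}$, extended by zero, and place a rearrangement of the residual $\zeta_0^\Delta$-mass in a thin reservoir strip $S_n:=[n,2n]\times(0,C/n)$ with $C$ large enough to accommodate it. Then $\tau_n\in\sR(\zeta_0)\subset\sR^+(\zeta_0)$; its means over the sub-cells $R_\alpha$ equal those of $\sigma$ by construction, so an oscillation estimate gives $\tau_n\rightharpoonup\sigma$ in $L^p$ (the reservoir piece tests to zero against any $\phi\in L^q$ since $\mathbf{1}_{S_n}\to 0$ pointwise a.e.), and $I(\tau_n)=I(\sigma)+O(1/n)$. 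If $I(\tau_n)>i_0$, remove from $\tau_n$ a subset in the high-$x_2$ part of $K_0$ of $L^1$-norm $O(I(\tau_n)-i_0)\to 0$ (possible since $\inf_{K_0}x_2>0$) to obtain $\tilde\tau_n\in\sR^+(\zeta_0)$ with $I(\tilde\tau_n)\le i_0$, still satisfying $\tilde\tau_n\rightharpoonup\sigma$. A diagonal choice over $(m,n)$ yields the required $\zeta_n$.

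The main obstacle is the allocation of the $F_\alpha$: a greedy assignment giving each $F_\alpha$ as a contiguous interval of $\zeta_0^\Delta$'s support, in decreasing order of $c_\alpha$, can require $|F_\alpha|>|R_\alpha|$ when $\zeta_0^\Delta$ decreases steeply, because a later cell's demand must be drawn from a long low-level tail. The fix is to let each $F_\alpha$ be non-contiguous, with different cells sharing thin ``ribbons'' of $\zeta_0^\Delta$'s graph in proportions dictated by their demands $c_\alpha$; the cumulative majorization $\sigma\preceq\zeta_0$, applied at every level to the cells sorted by $c_\alpha$, furnishes exactly enough slack to enforce $|F_\alpha|\le|R_\alpha|$. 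This is essentially a finite approximation to a Birkhoff--von Neumann decomposition, and is the technical heart of Douglas's density theorem \cite{RJD:UD}; the novelty here is the reservoir placement, which keeps the non-vanishing part of the approximants in a prescribed bounded region so that the impulse can be controlled uniformly.
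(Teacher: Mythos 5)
Your approach and the paper's are genuinely different, and the comparison is instructive. The paper treats Douglas's weak density theorem (that $\sR(\zeta_0)$ is weakly dense in $\sW(\zeta_0)$) as a black box: starting from a weakly convergent sequence $(\xi_k) \subset \sR(\zeta_0)$, it truncates to disks to land in $\sR^+(\zeta_0)$, uses a diagonal argument to match impulses, and then handles $I(\zeta)=i_0$ by first cutting $\zeta$ off at height $r_n$ to reduce to the strict-inequality case. Your proposal instead rebuilds the weak density from scratch, via conditional expectations, a cell-by-cell allocation of the graph of $\zeta_0^\Delta$, and a drifting reservoir strip to absorb the residual mass. Both routes lead to the lemma, but yours is re-deriving the heavy lifting of Douglas's theorem along the way.

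That said, there is a genuine gap in your argument, and it is precisely the step you flag as the ``technical heart.'' The existence of disjoint $F_\alpha \subset (0,|\mathrm{supp}\,\zeta_0|)$ with $\int_{F_\alpha}\zeta_0^\Delta = c_\alpha|R_\alpha|$ and $|F_\alpha|\le|R_\alpha|$ does follow from $\sigma\preceq\zeta_0$, but only via a non-contiguous (ribbon/transport-plan) allocation, which you describe but do not prove. A greedy contiguous allocation genuinely fails: take $\zeta_0^\Delta = 1_{(0,1)} + 0.1\cdot 1_{(1,2)}$ and $\sigma$ with two cells of measure $1$ and value $0.55$; the contiguous greedy gives $|F_2|=1.45>1$ even though $\sigma\preceq\zeta_0$. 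You recognise that the fix is Birkhoff-type sharing, but you do not carry it out, and since this allocation is exactly the content of the Ryff--Douglas density theorem, deferring to it defeats the purpose of the constructive rebuild: you might as well cite Douglas's weak density result directly, as the paper does, and spend the effort on the impulse control instead. A secondary issue is the opening reduction: cutting to $D_\Pi(0,R)$ does not place $\zeta$ in $\RR\times(\delta_0,\infty)$ (the disk meets $x_2=0$), so you also need to truncate away from the axis, e.g.\ by multiplying by $\mathbf{1}_{\{x_2>\delta\}}$, to justify the claim $\inf_{K_0}x_2>0$ used later for the impulse correction. Both of these are fixable, but the first one is where the real work of the lemma lies, and your write-up leaves it open.
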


\begin{proof}
Firstly consider the case when $I(\zeta)<i_0$.
Since $\sR(\zeta_0)$ is weakly dense in $\sW(\zeta_0)$ by the results of
Douglas \cite{RJD:UD} we may choose a sequence $(\xi_k)$ in $\sR(\zeta_0)$
converging weakly to $\zeta$ in $L^p(\Pi)$.

Given $g \in L^q(\Pi)$, where $1/p+1/q=1$, we have
\[
\left| \int_{D_\Pi(0,n)} \xi_k g - \int_\Pi \zeta g \right| 
\leq
\left| \int_\Pi \xi_k g - \int_\Pi \zeta g \right| 
+ \| \zeta_0 \|_p \| g \|_{L^q(\Pi \setminus D(0,n))}
\]
for all $k,n \in \NN$.
Hence if $(k(n))$ is any strictly increasing sequence in $\NN$ then
$\xi_{k(n)} 1_{D(0,n)} \to \zeta$ weakly in $L^p$ as $n \to \infty$.

Further, for each fixed $n \in \NN$ we have
$I(\xi_k 1_{D(0,n)}) \to I(\zeta 1_{D(0,n)})$ as $k \to \infty$, so by
a diagonal sequence argument we can chose a strictly increasing sequence
$(k(n))$ of positive integers such that
$I(\xi_{k(n)} 1_{D(0,n)}) - I(\zeta 1_{D(0,n)}) \to 0$ as $n \to \infty$.
Since $I(\zeta 1_{D(0,n)}) \to I(\zeta)$ by the monotone convergence theorem
we now have $I(\xi_{k(n)} 1_{D(0,n)}) \to I(\zeta)$ as $n \to \infty$.

Therefore $\xi_{k(n)} 1_{D(0,n)} \in \sR^+(\zeta_0)$ satisfies
$I(\xi_{k(n)} 1_{D(0,n)}) < i_0$ for all sufficiently large $n$ and
$\xi_{k(n)} 1_{D(0,n)} \to \zeta$ weakly in $L^p$.
This completes the proof in the case $I(\zeta)<i_0$.

Secondly, suppose $I(\zeta)=i_0>0$.
Then, by truncation, $\zeta$ can we written as the strong
limit of elements $\xi_n =\zeta 1_{\{x \mid x_2<r_n\}}$ of $\sW(\zeta_0)$
with $I(\xi_n)<i_0$, where $(r_n)$ is an increasing sequence of positive
numbers,  so each $\xi_n$ is in the weak closure
of $\sR^+(\zeta_0) \cap I^{-1}([0,i_0])$ by the above
argument, hence $\zeta$ is in the weak closure of
$\sR^+(\zeta_0) \cap I^{-1}([0,i_0])$.
\end{proof}

\begin{lemma}
Let $i_0>0$ and $0 \leq \zeta_0 \in L^p(\Pi)$ for some $1<p<\infty$,
compactly supported.
Then there exists $Z>0$ such that, if $(\zeta_n)$ is any maximising
sequence for $E$ relative to $\sW(\zeta_0,i_0)$ comprising elements of
$\sR^+(\zeta_0)$ then ${\displaystyle \left(\zeta_n 1_{\RR \times (0,Z)}\right)}$
is also a maximising sequence and 
${\displaystyle \left\|\zeta_n 1_{\RR\times(Z,\infty)}\right\|_1 \to 0}$.
\label{lm2}
\end{lemma}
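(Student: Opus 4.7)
The plan is to write $\eta_n := \zeta_n 1_{\RR \times (0,Z)}$ and $\nu_n := \zeta_n 1_{\RR \times (Z,\infty)}$, and to exploit two uniform facts: the impulse constraint $I(\zeta_n)=i_0$ combined with Chebyshev gives $\|\nu_n\|_1 \leq i_0/Z$ uniformly in $n$, while $\zeta_n \in \sR^+(\zeta_0)$ gives $\|\zeta_n\|_p \leq \|\zeta_0\|_p$ and $\|\zeta_n\|_1 \leq \|\zeta_0\|_1$. Assuming $Z \geq 1$, Lemma \ref{lm1} applied with some fixed $\alpha > 1$ then produces a function $F:[1,\infty)\to(0,\infty)$, independent of $n$, with $F(s) \to 0$ as $s \to \infty$ and $\sG\zeta_n(x) \leq F(x_2)$ for $x_2 \geq 1$.

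Expanding $E(\zeta_n) = E(\eta_n) + E(\nu_n) + \int \eta_n\,\sG\nu_n$ and using $\sG\zeta_n \leq F(Z)$ on $\{y_2>Z\}$,
\[
0 \leq E(\zeta_n) - E(\eta_n) = E(\nu_n) + \int \eta_n\,\sG\nu_n \leq \int \nu_n\,\sG\zeta_n \leq F(Z)\,\|\nu_n\|_1 \leq F(Z)\,i_0/Z .
\]
Once $\|\nu_n\|_1 \to 0$ is established, the $\|\cdot\|_1$-Lipschitz continuity of $E$ on bounded subsets (Lemma \ref{lmx5}), applied to $\{\zeta_n,\eta_n\}$ which lies in a uniformly bounded family, upgrades this to $E(\eta_n) \to M_0$, showing $(\eta_n)$ is a maximising sequence for the relaxed problem on $\sW(\zeta_0,\leq i_0)$ (whose supremum equals $M_0$).

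For the key fact $\|\nu_n\|_1 \to 0$, I would invoke the strict monotonicity of $M(i) := \sup_{\sW(\zeta_0,\leq i)} E$ in $i>0$, a consequence of the observation --- visible from the second form of $G$ in \eqref{eqx6} --- that upward $x_2$-translation of a nonnegative vorticity strictly increases $E$; this also gives $M(i_0)=M_0$. Since $\eta_n \in \sW(\zeta_0, \leq I(\eta_n))$ with $I(\eta_n) \leq i_0 - Z\|\nu_n\|_1$, one has $E(\eta_n) \leq M(i_0 - Z\|\nu_n\|_1)$; combining with the splitting estimate and $E(\zeta_n)\to M_0$ yields
\[
M_0 \leq \limsup_n M(i_0 - Z\|\nu_n\|_1) + F(Z)\,i_0/Z .
\]

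The main obstacle is turning this into $\|\nu_n\|_1 \to 0$ for a \emph{single} fixed $Z$, rather than a $Z$-dependent bound that only vanishes as $Z \to \infty$. To close the gap I would Steiner-symmetrise in $x_1$ (which preserves the rearrangement class, the impulse, the quantity $\|\cdot 1_{\RR\times(Z,\infty)}\|_1$, and cannot decrease $E$), extract a weakly convergent subsequence in $L^p$ whose limit $\zeta^*$ is --- via Lemma \ref{lmx4} combined with the upward-translation argument --- a relaxed maximiser with $I(\zeta^*)=i_0$, and then use uniform $y_2$-tightness from the impulse bound together with uniform integrability in $L^p$ to convert weak $L^p$ convergence into $\|\nu_n\|_1 \to \|\zeta^* 1_{\RR\times(Z,\infty)}\|_1$; choosing $Z$ at the outset so that this limit is smaller than any prescribed tolerance (using $\|\zeta^* 1_{\RR\times(Z,\infty)}\|_1 \leq I(\zeta^*)/Z = i_0/Z$) then delivers the conclusion.
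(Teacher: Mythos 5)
Your outline hits a genuine obstruction that you identify yourself but do not actually overcome. Let me be concrete about why the closing step fails and then sketch what the paper does instead.

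\textbf{The gap.} Your splitting estimate gives $E(\zeta_n)-E(\eta_n)\leq F(Z)\|\nu_n\|_1$ (you weakened this to $F(Z)\,i_0/Z$ via Chebyshev, which loses the dependence on $\|\nu_n\|_1$ that one would want to keep), and $E(\eta_n)\leq M(i_0-Z\|\nu_n\|_1)$. Combining, $M(i_0)-M(i_0-Z\liminf_n\|\nu_n\|_1)\leq F(Z)\,i_0/Z$. To extract $\|\nu_n\|_1\to 0$ from this you would need a quantitative modulus of strict monotonicity of $M$ at $i_0$, e.g.\ a positive lower bound on the left-derivative $M'(i_0^-)$. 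Strict monotonicity alone does not provide this (a concave, strictly increasing function can have vanishing derivative at a point), and proving $M'(i_0^-)>0$ is essentially the statement that the Lagrange multiplier $\lambda$ in Lemma~\ref{lmx6}(iii) is positive, whose proof in the paper itself relies on the boundedness of supports established via Lemma~\ref{lm2} --- so that route is circular.

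The proposed workaround does not close the gap. First, the step ``weak $L^p$ convergence of Steiner-symmetric $\nu_n$ plus $y_2$-tightness $\Rightarrow$ $\|\nu_n\|_1 \to \|\zeta^* 1_{\RR\times(Z,\infty)}\|_1$'' is false: there is no $x_1$-tightness, and Steiner symmetry does not forbid mass escaping to $|x_1|\to\infty$ via wide, flat profiles (e.g.\ slices of the form $c\,1_{|x_1|<L_n}$ with $L_n\to\infty$ and the height of the slab shrinking, which keep $\|\cdot\|_1$ constant while converging weakly to $0$ in $L^p$). Second, even if it were true you would only obtain $\limsup_n\|\nu_n\|_1\leq\|\zeta^* 1_{\RR\times(Z,\infty)}\|_1\leq i_0/Z$, which for a \emph{fixed} $Z$ is a nonzero bound, not the asserted limit $0$. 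The phrase ``choosing $Z$ at the outset so that this limit is smaller than any prescribed tolerance'' makes $Z$ depend on the tolerance, which is not what the lemma claims. The exact conclusion $\|\nu_n 1_{\RR\times(Z,\infty)}\|_1\to 0$ for a single $Z$ is precisely what is used later (in Lemma~\ref{lmx6}(ii)) to conclude that relaxed maximisers vanish above height $Z$, so the weaker $\limsup$ bound is not enough.

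\textbf{What the paper does instead.} For a single $\zeta\in\sW(\zeta_0,\leq i_0)\cap\sR^+(\zeta_0)$ with $E(\zeta)\geq M_0/2$, the proof pins down a low rectangle $Q\setminus Q_0$ (near the wall, within the strip $0<x_2<Z_1$, and of area at least $|S_0|$) on which $\sG\zeta\geq\mu\nu>0$, with $\mu,\nu$ depending only on $\zeta_0,\,i_0,\,p$. It then chooses $Z$ so that $\sG\xi<\mu\nu/2$ on $\{x_2>Z\}$ for all $\xi\in\sW(\zeta_0,\leq i_0)$, and \emph{moves} the tail $h:=\zeta 1_{\RR\times(Z,\infty)}$ to a rearrangement $h'$ supported in $Q\setminus(Q_0\cup S)$ where $\zeta$ vanishes. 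The competitor $\zeta+h'-h$ stays in $\sW(\zeta_0,\leq i_0)\cap\sR^+(\zeta_0)$ (the impulse only drops) and satisfies $E(\zeta+h'-h)\geq E(\zeta)+\tfrac{\mu\nu}{2}\|h\|_1$, whence $\|h\|_1\leq\tfrac{2}{\mu\nu}(M_0-E(\zeta))$. This bounds the tail mass by the energy \emph{gap}, with constants independent of $\zeta$; applied along a maximising sequence it gives $\|h_n\|_1\to 0$ directly, and then the Lipschitz continuity of $E$ (or the explicit estimate $E(\zeta_n-h_n)\geq E(\zeta_n)-\mu\nu\|h_n\|_1$) yields $E(\zeta_n 1_{\RR\times(0,Z)})\to M_0$. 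The key idea you are missing is this competitor construction, which converts the qualitative statement ``putting mass above $Z$ is wasteful'' into a bound on $\|h\|_1$ that degenerates with the energy gap rather than with $Z$.
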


\begin{proof}
Let $M_0$ be the supremum of $E$ relative to $\sW(\zeta_0,\leq i_0)$,
so $M_0<\infty$ by Lemma \ref{lmx5}.
Consider $\zeta \in \sW(\zeta_0,\leq i_0) \cap \sR^+(\zeta_0)$ such that
$E(\zeta) \geq M_0/2$; such a $\zeta$ must exist if any such sequences
$(\zeta_n)$ exist.
Write $S=\{x \mid \zeta(x)>0\}$ and $S_0 = \{x \mid \zeta_0(x)>0\}$.

Then $\| \zeta \|_1 \leq \| \zeta_0 \|_1$ since $\zeta \in \sR^+(\zeta_0)$,
hence $\| \sG \zeta \|_{\sup} \geq M_0/\| \zeta_0 \|_1$
by H\"{o}lder's inequality.
The estimate of Lemma \ref{lm1} shows there exists $Z_0>0$ such that
every function $\xi \in \sW(\zeta_0,\leq i_0)$ obeys
$\sG\xi(x)< M_0/(2\|\zeta_0\|_1)$ if $x_2>Z_0$.
Choose $x^* \in \Pi$ at which $\sG \zeta$ achieves its supremum;
thus $x^*_2 \leq Z_0$.

Consider $y \in \Pi$ satisfying $y_2>2Z_0$.
Then we have
$|x^*-y| \geq y_2-x^*_2 > y_2/2$ so
\[
G(x^*,y) =
\frac{1}{4\pi} \log \left( 1+ \frac{4 x^*_2 y_2}{|x^*-y|^2} \right)
< \frac{1}{4\pi} \log \left( 1+ \frac{16 Z_0}{y_2} \right) .
\]
We choose $Z_1>2Z_0$ such that $Z_1^2 \geq |S_0|$ and
\[
\frac{1}{4\pi} \log \left( 1+ \frac{16 Z_0}{Z_1} \right)
\leq \frac{M_0}{2\| \zeta_0\|_1^2} .
\]
This choice of $Z_1$ ensures that if $y_2>Z_1$ then we have
\[
G(x^*,y) < \frac{1}{4\pi} \log \left( 1+ \frac{16 Z_0}{Z_1} \right)
\leq \frac{M_0}{2\| \zeta_0\|_1^2}
\]
whereas if $y_2 \leq Z_1$ and $|y_1-x^*_1|>Z_1$ then we have
\[
G(x^*,y) < \frac{1}{4\pi} \log \left( 1+ \frac{4 Z_0}{Z_1} \right)
\leq \frac{M_0}{2\| \zeta_0\|_1^2} .
\]

Write $Q=(x^*_1-Z_1,x^*_1+Z_1) \times (0,Z_1)$; thus
$|Q| = 2 Z_1^2 \geq 2|S_0|$.
Then, by the above bounds,
\[
\int_{\Pi \setminus Q} G(x^*,y) \zeta(y) \rd y \leq \frac{M_0}{2\|\zeta_0\|_1}
\]
and therefore, since the choice of $x^*$ ensures
$\sG(x^*) \geq M_0/\|\zeta_0\|_1$, we have
\[
\int_Q  G(x^*,y) \zeta(y) \rd y \geq \frac{M_0}{2\|\zeta_0\|_1}.
\]
It follows by H\"older's inequality that
\[
\int_Q G(x^*,y) \zeta(y) \rd y
\leq \|G(x^*,\cdot)\|_{L^q(Q)} \|\zeta\|_{L^p(Q)}
\leq g \|\zeta\|_{L^p(Q)}
\]
where $1/q+1/p=1$ and we may find a suitable value for the constant $g$
from the rearrangement inequality for the integral of a product of two
functions, as follows.
We have
\[
G(x^*,y) \leq \frac{1}{4\pi} \log \left( 1+\frac{8Z_1^2}{|x^*-y|^2} \right),
\quad y \in Q,
\]
so, denoting by $D$ the disc with centre $x^*$ and radius $Z_1$,
denoting by $Q^*$ the disc with centre $x^*$ and area $|Q|$ and noting that
$|D| \geq |Q^*|$, we have
\begin{eqnarray*}
\|G(x^*,\cdot)\|_{L^q(Q)}^q = \int 1_Q(y) G(x^*,y)^q \rd y 
& \leq & \int 1_{Q^*}(y) 
\left(\frac{1}{4\pi}\log\left(1+\frac{8Z_1^2}{|x^*-y|^2}\right)\right)^q \rd y
\\
& \leq & \int_D
\left(\frac{1}{4\pi}\log\left(1+\frac{8Z_1^2}{|x^*-y|^2}\right)\right)^q \rd y
\\
& \leq & \int_0^{Z_1} 
\left(\frac{1}{2\pi}\log\left(\frac{3Z_1}{\rho}\right)\right)^q 2\pi\rho \rd\rho
=: g^q .
\end{eqnarray*}
Hence
\[
\| \zeta \|_{L^p(Q)} \geq  \frac{M_0}{2g\|\zeta_0\|_1} =:m .
\]
Choose $\theta>0$ such that
\[
\int_0^\theta (\zeta_0^\Delta)^p <  \frac{m^p}{2} .
\]
Then we have
\[
|Q \cap S | \geq 2\theta .
\]
Let $Q_0=(x^*_1 -Z_1,x^*_1 +Z_1)\times(0,\eta)$ where
$\eta = \theta/(2Z_1)$. 
Then $|Q_0|=\theta$ and therefore
\[
\int_{Q \setminus Q_0} \zeta
\geq \inf_{U \subset S, |U|=\theta} \int_U \zeta
\geq \inf_{U \subset S_0, |U| = \theta} \int_U \zeta_0
\geq \int_0^\theta \zeta_0^\nabla =: \nu .
\]
If $x,y \in Q \setminus Q_0$ then
\[
G(x,y) \geq \frac{1}{4\pi}\log\left(1+\frac{4\eta^2}{5Z_1^2}\right) =:\mu .
\]
Hence for all $x \in Q \setminus Q_0$ we have
$\sG\zeta(x) \geq \mu \nu$.
Moreover
\[
|Q \setminus Q_0| = |Q|-\theta \geq \frac{|Q|}{2} \geq |S_0| .
\]

Choose $Z > Z_1$ such that $x_2>Z$ implies
$\sG\xi(x)<\mu\nu/2$ for all $\xi \in \sW(\zeta_0,\leq i_0)$,
by Lemma \ref{lm1}.
Let $h = \zeta 1_{\RR \times (Z,\infty)}$ and consider the possibility that
$h$ is non-trivial; then $h^{-1}(0,\infty)$ is a set of finite positive planar
Lebesgue measure and is therefore measure-theoretically isomorphic to a bounded
interval in $\RR$ with linear Lebesgue measure.
Similarly $Q \setminus (Q_0 \cup S)$, which has planar Lebesgue
measure greater than that of $S \setminus Q$, is therefore
isomorphic to another interval, of greater length.
It follows that we can choose a rearrangement $h^\prime$ of $h$ supported in
$Q \setminus (Q_0 \cup S)$.
Then $\zeta +h^\prime -h$ lies in $\sW(\zeta_0,\leq i_0) \cap \sR^+(\zeta_0)$
and 
\[
E(\zeta +h^\prime -h)
= E(\zeta) + \int_\Pi (\sG \zeta) (h^\prime - h) + E(h^\prime - h)
\geq E(\zeta) + \mu\nu\|h^\prime\|_1 - \frac{\mu\nu}{2}\|h\|_1
= E(\zeta) + \frac{\mu\nu}{2} \| h \|_1,
\]
so
\[
\| h \|_1 \leq \frac{2}{\mu\nu}(M_0 - E(\zeta)).
\]
Hence if $(\zeta_n)$ is a maximising sequence belonging to
$\sW(\zeta_0,\leq i_0) \cap \sR^+(\zeta_0)$ and
$h_n =  \zeta_n 1_{\{x \in \Pi \mid x_2>Z \}}$ then $\| h_n \|_1 \to 0$ so
\[
E(\zeta_n - h_n) = E(\zeta_n) - \int_\Pi (\sG\zeta_n)h_n +E(h_n)
\geq E(\zeta_n) - \mu\nu \| h_n \|_1 \to M_0.
\]
That is, $( \zeta_n 1_{\RR \times (0,Z)})$ is also a maximising sequence of
functions in $\sW(\zeta_0,\leq i_0) \cap \sR^+(\zeta_0)$.
\end{proof}

\section{Existence of relaxed maximisers and first variation condition.}
\label{exist}

In this section we use the estimates of Section \ref{prelim} to extend the
existence theory of \cite[Theorem 16(ii)]{GRB:VP} to the relaxed problem,
since in the proofs of the main results we will have to consider subsidiary
variational problems where only relaxed solutions might exist under the
hypotheses that apply
(situations where solutions to an unrelaxed problem of this type fail
to exist can be found
in the study of Lamb's vortex \cite[Corollary 1]{GRB:LAMB}).
The first variation condition at a maximum gives rise to a functional
relationship between the vorticity and the stream function that shows the
maximisers represent steady flows.
Finally we show that for large values of the impulse, the relaxed solutions
are indeed solutions of the unrelaxed problem.
This will be needed to show that the Stability Theorem applies in a wide
range of cases.

\begin{lemma}
Let $2<p<\infty$, let $i_0>0$ and let
$0 \leq \zeta_0 \in L^1(\Pi) \cap L^p(\Pi)$ have compact support.
Then\\
{\rm (i)} there exist maximisers for $E$ relative to $\sW(\zeta_0,\leq i_0)$
and all maximisers belong to $\sW(\zeta_0,i_0)$,\\
{\rm (ii)} all maximisers are Steiner-symmetric elements of $\sR\sC(\zeta_0)$,\\
{\rm (iii)} for every maximiser $\zeta$ there exists an increasing function
$\varphi$ and a number $\lambda > 0$ such that
$\zeta = \varphi \circ (\sG\zeta - \lambda x_2)$ almost everywhere in $\Pi$
and $\zeta$ vanishes almost everywhere in the set
$\{ x \in \Pi \mid \psi(x)-\lambda x_2 \leq  0 \}$,\\
{\rm (iv)} every maximiser vanishes outside a bounded subset of
$\RR \times (0,Z)$, where $Z$ is the number, depending on $i_0$, $\zeta_0$
and $p$ only, provided by Lemma \ref{lm2}.
\label{lmx6}
\end{lemma}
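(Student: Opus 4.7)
I plan to prove the assertions in the order (i), (iv), (ii), (iii); $M_0 := \sup_{\sW(\zeta_0,\leq i_0)} E$ is finite because Lemmas \ref{lm1} and \ref{lm3} give a uniform bound on $\|\sG\xi\|_\infty$ for $\xi \in \sW(\zeta_0,\leq i_0)$. For \textbf{(i)}, I would take a maximising sequence $(\xi_n) \subset \sW(\zeta_0,\leq i_0)$ and replace each $\xi_n$ by its Steiner symmetrisation in the line $x_1 = 0$; this preserves membership in $\sW(\zeta_0,\leq i_0)$ and the impulse $I$, and does not decrease $E$, by a slicewise Riesz rearrangement inequality applied to the kernel $G$, which is symmetric decreasing in $|x_1 - y_1|$ for each fixed $x_2, y_2$. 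Boundedness in $L^p$ yields a weakly convergent subsequence $\xi_n \rightharpoonup \hat\zeta \in \sW(\zeta_0,\leq i_0)$ (weakly closed), and Lemma \ref{lmx4} gives $E(\xi_n) \to E(\hat\zeta) = M_0$. If $I(\hat\zeta) < i_0$, an upward vertical translation of $\hat\zeta$ by $s = (i_0 - I(\hat\zeta))/\|\hat\zeta\|_1$ reaches $I = i_0$ and strictly increases $E$ by the monotonicity of $G$ in $x_2, y_2$ visible in the second form of \eqref{eqx6}, contradicting maximality, so $\hat\zeta \in \sW(\zeta_0, i_0)$.

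For \textbf{(iv)}, I would adapt the rearrangement-swap argument of Lemma \ref{lm2} directly to any maximiser $\zeta$: by Lemma \ref{lm1} pick $Z > 0$ (depending only on $\zeta_0, i_0, p$) with $\sG\xi$ uniformly small for $x_2 > Z$ and $\xi \in \sW(\zeta_0,\leq i_0)$; if $\|\zeta 1_{x_2 > Z}\|_1 > 0$, swap this excess mass into a suitable rectangle near the maximum of $\sG\zeta$ to produce a competitor in $\sW(\zeta_0,\leq i_0)$ of strictly larger $E$, a contradiction. Bounding the support in $x_1$ uses the Steiner symmetry produced in (i) and Lemma \ref{lmx2}, which gives $\sG\zeta(x) \to 0$ as $|x_1| \to \infty$ with $x_2 \leq Z$, via an analogous swap. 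The Steiner-symmetry half of \textbf{(ii)} follows from the equality case of the slicewise Riesz inequality used in (i): equality forces each slice $\zeta(\cdot, x_2)$ to be symmetric decreasing about a common centre, which I would normalise to $0$ by horizontal translation.

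For the remainder of \textbf{(ii)} and for \textbf{(iii)}, once (iv) localises $\zeta$ to the bounded strip $\Pi_Z = \RR \times (0, Z)$ the function $\sG\zeta - \lambda x_2$ is bounded on the relevant domain, and standard Lagrange multiplier reasoning furnishes $\lambda \in \RR$ such that $\zeta$ maximises the bounded linear functional $\eta \mapsto \int_{\Pi_Z}(\sG\zeta - \lambda x_2)\eta \rd x$ over $\eta \in \sW_{\Pi_Z}(\zeta_0)$. Burton's rearrangement-theoretic characterisation (maximisers of such linear functionals on the convex hull of a rearrangement class lie in $\sR\sC(\zeta_0)$ and are comonotone with the data) then forces $\zeta \in \sR\sC(\zeta_0)$ and $\zeta = \varphi \circ (\sG\zeta - \lambda x_2)$ a.e.\ for an increasing $\varphi$ that vanishes on $(-\infty, 0]$, the zero level being pinned by the vanishing of $\sG\zeta$ on $\partial\Pi$. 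Strict positivity $\lambda > 0$ is the KKT condition for the active constraint $I = i_0$, since the argument of (i) shows that $E$ strictly increases along impulse-raising directions.

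The main obstacle is the $\sR\sC(\zeta_0)$ conclusion, because Douglas's theorem does not apply directly to $\sW(\zeta_0, i_0)$ with $I$ unbounded on $\sW(\zeta_0)$. The resolution is through (iv), which localises the problem to a strip on which $I$ is bounded; consequently (iv) has to be proved first, directly from a rearrangement-swap argument inside $\sW(\zeta_0, i_0)$, without presupposing either the extreme-point structure of (ii) or the functional form of (iii). Managing this logical order — the $x_1$-bound in (iv) needs the Steiner symmetry from (ii), while the $\sR\sC(\zeta_0)$ part of (ii) needs (iv) — is the principal technical nuisance, but it is handled by proving the $x_2$-bound of (iv) first (which uses only Lemma \ref{lm1} uniformly on $\sW(\zeta_0,\leq i_0)$), then Steiner symmetry via the Riesz equality argument, then closing off the $x_1$-bound and the extreme-point conclusion.
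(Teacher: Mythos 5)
Your outline of (i) and of the Steiner-symmetry half of (ii) matches the paper (Riesz inequality slicewise in $x_1$; equality case via Lieb's lemma). The genuine gap is in the plan to prove all of (iv) \emph{before} the multiplier structure of (iii), and in two of the individual steps within that plan.

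First, the ``$x_2$-bound via direct swap on $\zeta$''. The swap in the proof of Lemma \ref{lm2} requires a region of the rectangle $Q \setminus Q_0$ disjoint from the positivity set $S=\{\zeta>0\}$ and of measure at least $|\{h>0\}|$; this is arranged there because $\zeta\in\sR^+(\zeta_0)$ forces $|S|\leq|S_0|$. A generic element of $\sW(\zeta_0,\leq i_0)$ (which is all you know about your maximiser at this stage) can have $|S|=\infty$, in which case there is no room and the swap never starts. The finiteness of $|S|$ is exactly what the $\sR\sC(\zeta_0)$ conclusion of (ii) buys you, but your order forbids using it. The paper circumvents this by Lemma \ref{lmx8}: the maximiser is a weak $L^p$-limit of $\sR^+(\zeta_0)$-functions with $I\leq i_0$, weak lower semicontinuity of $E$ makes that sequence maximising, and Lemma \ref{lm2} is applied to \emph{that sequence} rather than to $\zeta$ itself.

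Second, the ``analogous swap for the $x_1$-bound''. Here the decay $\sG\zeta(x)\to 0$ as $|x_1|\to\infty$ is available, but you must also respect $I\leq i_0$. Mass at large $|x_1|$ may sit at any height $x_2<Z$; moving it toward the maximum of $\sG\zeta$ (typically at higher $x_2$) can raise $I$ and leave $\sW(\zeta_0,\leq i_0)$; moving it at fixed $x_2$ into the core is blocked because the Steiner-symmetric slice $\zeta(\cdot,x_2)$ is already occupied on $(-R(x_2),R(x_2))$ with $R(x_2)$ potentially unbounded in $x_2$; moving it to lower $x_2$ decreases $\sG\zeta$. So this swap does not close. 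The paper's actual route is to prove (iii) first up to $\lambda\geq 0$ --- via the ``value function'' $f(i)=\sup_{\sW(\zeta_0,\leq i)}\int\psi\xi$, its convexity and subdifferentiability, and the rectangle-patching construction of $\varphi$ --- and only then obtain the $x_1$-bound from the vanishing of $\zeta$ on $\{\psi-\lambda x_2<0\}$ (or $\{\psi<\kappa\}$ when $\lambda=0$) combined with Lemma \ref{lmx2}.

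Third, $\lambda>0$ is not just ``the KKT condition for an active constraint''. The subdifferential argument only yields $\lambda\geq 0$; the paper must then establish strict positivity via the virial identity $2\int_\Pi \Phi(\sG\zeta-\lambda x_2)=\lambda I(\zeta)$ from \cite[Lemma 9]{GRB:VP} together with the fact that $\Phi\circ(\sG\zeta-\lambda x_2)>0$ on the positivity set of $\zeta$, which in turn requires the compact support conclusion of (iv). So the actual dependency chain is (i) $\to$ Steiner symmetry $\to$ $x_2$-bound (via Lemmas \ref{lmx8}, \ref{lm2}) $\to$ $\sR\sC(\zeta_0)$ $\to$ $\varphi$ and $\lambda\geq 0$ $\to$ bounded support (iv) $\to$ $\lambda>0$, and the two places where your reordering fails are exactly the two swaps you invoke in (iv).
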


\begin{proof}
To prove (i) we choose a maximising sequence $(\zeta_n)$ for $E$ relative
to $\sW(\zeta_0, \leq i_0)$.
It follows from the 1-dimensional case of the Riesz rearrangement inequality
that Steiner symmetrisation about the $x_2$-axis does not decrease $E$.
Therefore let us assume $(\zeta_n)$ to comprise Steiner-symmetric functions.
The sequence $(\zeta_n)$ is bounded in both $\| \, \|_1$ and $\| \, \|_p$.
We may therefore pass to a subsequence and assume $(\zeta_n)$ converges weakly
in $L^p(\Pi)$ to a limit $\widehat{\zeta}$.
Then $\widehat{\zeta} \in \sW(\zeta_0,\leq i_0)$ and $E(\widehat{\zeta}) = M_0$
by Lemma \ref{lmx4}.
Thus $\widehat{\zeta}$ is a maximiser.

If $\zeta \in \sW(\zeta_0, \leq i_0)$ is any element then translation of
$\zeta$ in the positive $x_2$ direction strictly increases $E(\zeta)$.
Therefore every maximiser $\zeta$ of $E$ relative to $\sW(\zeta_0,\leq i_0)$
must satisfy $I(\zeta)=i_0$.

To prove (ii) consider a maximiser $\zeta$.
We can write
\[
E(\zeta) = \int_0^\infty \int_0^\infty J(\zeta,x_2,y_2) \rd x_2 \rd y_2
\]
where
\[
J(\zeta,x_2,y_2) = \int_{-\infty}^\infty \int_{-\infty}^\infty
\zeta(x_1,x_2)G(x_1,x_2,y_1,y_2)\zeta(y_1,y_2)\rd x_1 \rd y_1 
\]
with similar expressions for $E(\zeta^s)$ and $J(\zeta^s,x_2,y_2)$.
Since $G(x_1,x_2,y_1,y_2)$ is a decreasing function of $|x_1-y_1|$ for
fixed $x_2$ and $y_2$, the Riesz rearrangement inequality shows that
\[
J(\zeta,x_2,y_2) \leq J(\zeta^s,x_2,y_2) \quad \forall x_2>0,y_2>0
\]
and hence $E(\zeta) \leq E(\zeta^s)$, so $E(\zeta)=E(\zeta^s)$ by maximality.
Thus
\[
\int_0^\infty \int_0^\infty (J(\zeta^s,x_2,y_2)-J(\zeta,x_2,y_2))
\rd x_2 \rd y_2 =0
\]
and since the integrand is non-negative we must have
\[
J(\zeta,x_2,y_2)=J(\zeta^s,x_2,y_2) < \infty
\]
for almost all pairs $(x_2,y_2)$ of positive numbers. 
We can now apply the one-dimensional case of Lieb's analysis\footnote
{Lieb's result applies to the case when one of the functions is strictly
symmetric decreasing, which is the situation here, and has an unstated but
necessary assumption that the integrals in question are finite.
In the proof, $m$ should be defined by $m:=f*\overline{h}$, where
$\overline{h}(y)=h(-y)$, so that $m$ is invariant under equal translations
of $f$ and $h$.}
\cite[Lemma 3]{LIEB}
of equality in the Riesz rearrangement inequality to conclude that,
for almost every pair $(x_2,y_2)$ of positive real numbers,
the functions $\zeta(\cdot,x_2)$ and $\zeta(\cdot,y_2)$ are symmetric
decreasing about the same real number $\beta$ say, and then that $\beta$ is
independent of $x_2$ and $y_2$.
Thus $\zeta$ is Steiner symmetric about the line $x_2 = \beta$.

Lemmas \ref{lmx8} and \ref{lm2} show that $\zeta$ is a weak limit in $L^p$ of
functions $\zeta_n \in \sR^+(\zeta_0)$ satisfying $I(\zeta_n) \leq i_0$ and that
$\zeta_n 1_{\RR \times (Z,\infty)} \to 0$ in $L^1$.
It follows that $\zeta$ vanishes outside $\RR \times (0,Z)$.
Any maximiser $\zeta$ satisfies $I(\zeta)=i_0$ and
the strict convexity of $E$ shows that $\zeta$ must be an
extreme point of $\sW_{\RR \times (0,Z)} (\zeta_0,i_0)$.
Since $I$ is a bounded linear functional relative to $L^1(\RR \times (0,Z))$,
we can apply Douglas's characterisation \cite[Theorem 2.1(ii)]{RJD:UD} of
the extreme points of the intersection of $\sW(\zeta_0)$
with a closed hyperplane to deduce that $\zeta \in \sR\sC(\zeta_0)$
(the statement of Douglas's result excludes $L^1$ but the proof of part (ii)
is also valid for $L^1$).

To prove  (iii) and (iv) let $\zeta$ be a maximiser and $\psi = \sG \zeta$.
From convexity of $E$ it follows that $\zeta$ maximises $\int_\Pi \psi\xi$
subject to $\xi \in \sW(\zeta_0,\leq i_0)$.
Define the ``value function'' $f$ by
\[
f(i) = \sup_{\xi \in \sW(\zeta_0,\leq i)} \int_\Pi \psi\xi \quad
\mbox{ for all } i \geq 0.
\]
Since $\psi$ is bounded it follows that $f$ is finite-valued.
Since $\sW(\zeta_0,\leq i_1) \subset \sW(\zeta_0,\leq i_2)$ if $0 \leq i_1 \leq i_2$
it follows that $f$ is increasing.
The convexity of $\sW(\zeta_0,<\infty)$ ensures that
\[
(1-\theta)\sW(\zeta_0,\leq i_1) + \theta \sW(\zeta_0,\leq i_2)
\subset \sW(\zeta_0,\leq (1-\theta) i_1 + \theta i_2) \quad 
\forall i_1 \geq 0, \;  i_2 \geq 0, \; 0 \leq \theta \leq 1,
\]
hence $-f$ is a convex function.
Therefore $-f$ is continuous and subdifferentiable on $(0,\infty)$.

Now $\zeta$ maximises $\int_\Pi \psi\xi - f(I(\xi))$ subject to
$\xi \in \sW(\zeta_0,<\infty)$ so we have the subdifferential condition that,
for some $-\lambda \in \partial (-f)(i_0)$,
\begin{equation}
\zeta
\mbox{ maximises }
\int_\Pi \psi\xi -\lambda I(\xi) 
\left( = \int_\Pi (\psi -\lambda  x_2) \xi \right)
\mbox{ subject to }
\xi \in \sW(\zeta_0,<\infty) .
\label{eq6}
\end{equation}
Since $-f$ is decreasing we have $-\lambda \leq 0$ 
so $\psi-\lambda x_2$ is bounded above, from Lemma \ref{lm1}.

In order to derive a functional relationship between $\zeta$ and
$\Psi:=\psi-\lambda x_2$ from \eqref{eq6}, for each $n \in \NN$
we now write $Q(n)$ for the planar rectangle $(-n,n) \times (0,n)$
and consider the consequences of
rearranging $\zeta$ within $Q(n)$ while fixing $\zeta$ outside $Q(n)$.
Thus $\zeta 1_{Q(n)}$ maximises $\int_{Q(n)}\Psi \xi$ subject
to $\xi \in \sR_{Q(n)}(\zeta 1_{Q(n)})$ so by \cite[Lemma 2.15]{GRB:VARP} 
there exists an increasing function $\varphi_n$ such that
$\zeta =  \varphi_n \circ \Psi$ almost everywhere in $Q(n)$.
We can assume $\varphi_n$ to be defined on an interval $I_n$ such that
$\Psi(Q(n))^\circ \subset I_n \subset \overline{(\Psi(Q(n))}$.
We claim that all the $\varphi_n$ can be assumed to be restrictions of
a single increasing function $\varphi$ defined on the interval $I=\bigcup I_n$.
To see this, firstly define
$S_n=\{s \in I_n \mid \varphi_n(s) \neq \varphi_{n+1}(s) \}$.
Then $S_n$ must have empty interior relative to $I_n$, otherwise
$\varphi_n \circ \Psi \neq \varphi_{n+1} \circ \Psi$ throughout a nonempty
open subset of $Q(n)$, which must have positive measure.
Hence $\varphi_n =\varphi_{n+1}$ on a dense subset of $I_n$ and therefore
at all points where $\varphi_n$ and $\varphi_{n+1}$ are both continuous
relative to $I_n$.
Let $D \subset I$ comprise all discontinuities of the $\varphi_n$, $n \in \NN$,
which is a countable set by monotonicity; then $\varphi_k(s)=\varphi_n(s)$
provided that $s \in I_n \setminus D$ and $k>n$.
Let $T$ be the set of $s \in I$ for which $\Psi^{-1}(s)$ has positive measure,
which is also countable.
If $s \in T$ and $n$ is the least number for which $\Psi^{-1}(s) \cap Q(n)$
has positive measure then we must have $\varphi_k(s)=\varphi_n(s)$ for all
$k>n$, because $\varphi_k \circ \Psi = \varphi_n \circ \Psi$ almost
everywhere on $Q(n)$; hence $\varphi_k(\Psi(x))=\varphi_n(s)$ for almost
every $x \in Q(k) \cap \Psi^{-1}(s)$, for every $k \in \NN$.
For $s \in I \setminus (D \setminus T)$ we can now define
$\varphi(s)=\varphi_n(s)$ for all sufficiently large $n$ and find that
$\varphi$ is increasing and $\zeta=\varphi \circ \Psi$
almost everywhere on $\Pi \setminus \Psi^{-1}(D \setminus T)$.
Since $\Psi^{-1}(D \setminus T)$ has zero measure,
we can complete the construction of $\varphi$ by adopting any definition of
$\varphi(s)$ for $s \in D \setminus T$ that makes $\varphi$ increasing on $I$.

We defer the proof that $\lambda$ is strictly positive until after (iv).

For (iv), let $\zeta$, $\psi$, $\varphi$ and $\lambda \geq 0$ be as above.
Since $\psi$ is Steiner symmetric about the $x_2$-axis, Lemma \ref{lmx2}
yields a constant $c_6>0$ such that
\begin{equation}
\psi(x) \leq
c_6(I(\zeta)+\|\zeta\|_1+\|\zeta\|_p) x_2 \min\{1,| x_1 |^{-1/(2p)}\},
\quad x \in \Pi .
\label{eq2}
\end{equation}
Since $\zeta$ maximises $\int_\Pi (\psi-\lambda x_2) \xi$ subject to
$\xi \in \sW(\zeta_0,<\infty)$ we must have $\zeta=0$ almost everywhere
in the set 
$A = \{ x \in \Pi \mid \psi(x)-\lambda x_2 <0\}$, otherwise
\[
\int (\psi-\lambda x_2) \zeta <
\int (\psi-\lambda x_2) \zeta 1_{\Pi \setminus A}
\]
which is impossible since $\zeta 1_{\Pi \setminus A} \preceq \zeta$ so
$\zeta 1_{\Pi \setminus A} \in \sW(\zeta_0,<\infty)$.
The set of $x$ where $\zeta(x)(=-\Delta\psi(x))$ is positive and
$\psi(x)-\lambda x_2=0$ necessarily has zero measure.

If $\lambda>0$ it now follows from \eqref{eq2}  that $\zeta$ vanishes almost
everywhere outside the region where
$x_1^{1/(2p)} \leq c(I(\zeta) + \|\zeta\|_1 + \|\zeta\|_p)/\lambda$,
so the support of $\zeta$ is also bounded in the $x_1$ direction.

If $\lambda=0$ then $\zeta=\varphi \circ \psi$ almost everywhere in $\Pi$ so,
since $\varphi$ is increasing, there exists $\kappa \geq 0$ such that
\[
\{ x \in \Pi \mid \psi(x) > \kappa \}
\subset \{ x \in \Pi \mid \zeta(x) >0 \}
\subset \{ x \in \Pi \mid \psi(x) \geq \kappa \}
\]
apart from sets of measure zero.
Thus $\kappa>0$, for otherwise $\zeta>0$ almost everywhere on $\Pi$,
whereas $\zeta_0$ vanishes outside a set of finite measure
and $\zeta \in \sR\sC(\zeta_0)$.
It then follows from \eqref{eq2} that the support of $\zeta$ lies within
the region
$\kappa x_1^{1/(2p)} \leq c_6(I(\zeta)+\|\zeta\|_1+\|\zeta\|_p) Z$.
Thus $\zeta$ vanishes outside a bounded region in the case $\lambda=0$ also.

We now return to (iii) and let $\zeta$, $\varphi$ and $\lambda \geq 0$ be as
above.
We can assume $\varphi$ to be non-negative throughout $\RR$ and to vanish on
$(-\infty,0]$, we define $\Phi(s)=\int_{-\infty}^s \varphi$ and we note that
$\Phi(s)>0$ if and only if $\varphi(s)>0$, because $\varphi$ is increasing.
From \cite[Lemma 9]{GRB:VP} we have
\[
2 \int_Q \Phi(\sG\zeta(x) - \lambda x_2) \rd x
- \lambda \int_Q \zeta(x)x_2 \rd x
= \int_{\partial Q} \Phi(\sG\zeta(x) - \lambda x_2)(x \cdot \mathfrak{n}) \rd x
\]
where $Q=[-R,R] \times [0,R]$ is a rectangle containing the support
of $\zeta$ and $\mathfrak{n}$ is the outward unit normal.
Since $\varphi\circ(\sG\zeta-\lambda x_2)$ vanishes outside $Q$, so too
does $\Phi\circ(\sG\zeta-\lambda x_2)$ and we deduce that
\begin{equation}
2 \int_\Pi \Phi(\sG\zeta(x) - \lambda x_2) \rd x = \lambda I(\zeta).
\label{eqx9}
\end{equation}
Since $\Phi(\sG\zeta(x) - \lambda x_2)>0$ almost always when $\zeta(x)>0$
we deduce from \eqref{eqx9} that $\lambda>0$.
\end{proof}

The next result shows that the hypotheses of Theorems \ref{thm1} and
\ref{thm2} below are satisfied in a wide range of situations.

\begin{lemma}
Let $2<p<\infty$, let $\zeta_0 \in L^1(\Pi) \cap L^p(\Pi)$ be non-negative
and have compact support, let $i>0$ and let $\Sigma_0$ be the set of
maximisers of $E$ relative to $\sW(\zeta_0,\leq i)$.
Then for all sufficiently large $i$ we have $\Sigma_0  \subset \sR(\zeta_0)$.
\label{lmx7}
\end{lemma}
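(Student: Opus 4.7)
The plan is to argue by contradiction: suppose there are $i_n\to\infty$ and $\zeta_n\in\Sigma_0(i_n)\setminus\sR(\zeta_0)$. By Lemma~\ref{lmx6}(ii)--(iii), each such $\zeta_n$ is Steiner-symmetric with $\zeta_n^\Delta=\zeta_0^\Delta\mathbf{1}_{(0,\ell_n)}$ for some $\ell_n<L:=|\{\zeta_0>0\}|$, and satisfies $\zeta_n=\varphi_n\circ\Psi_n$ where $\Psi_n:=\sG\zeta_n-\lambda_n x_2$ and $\lambda_n>0$. Combining \eqref{eq6} with the Hardy--Littlewood inequality shows that $\zeta_n$ fills the super-level sets of the positive part of $\Psi_n$ with the decreasing portion of $\zeta_0^\Delta$, and in the strict-curtailment case this forces $\ell_n=|\{\Psi_n>0\}|$. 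Hence $\zeta_n\in\sR(\zeta_0)$ if and only if $|\{\Psi_n>0\}|\geq L$; the goal is to prove the latter for all $n$ large.

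The first step is the asymptotic $M(i):=\sup_{\sW(\zeta_0,\leq i)}E \asymp \log i$. The lower bound $M(i)\geq \|\zeta_0\|_1^2(\log i)/(4\pi)-O(1)$ comes from testing with $\eta=\zeta_0^s(\cdot-h e_2)$ at height $h=(i-I(\zeta_0^s))/\|\zeta_0\|_1$, using the pointwise inequality $G(x+he_2,y+he_2)\geq (2\pi)^{-1}\log(2h/|x-y|)$ valid for $h\geq |x-y|$. The matching upper bound $M(i)\leq C\log i$ is derived from $E(\zeta)\leq\tfrac12\|\zeta\|_1\|\sG\zeta\|_\infty$ and the pointwise estimates of Lemmas~\ref{lm1}, \ref{lm3} and~\ref{lmx1}, evaluated at $x_2\sim Z(i)$ where $Z$ is the support-height bound from Lemma~\ref{lm2}. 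Since $-M$ is convex (as used in the proof of Lemma~\ref{lmx6}(iii)) and $M(i)=O(\log i)$, its supergradients $\lambda_n\in \partial M(i_n)$ satisfy $\lambda_n\leq 2M(i_n)/i_n\to 0$ as $i_n\to \infty$.

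The second step converts $\lambda_n\to 0$ into a large lower bound on $|\{\Psi_n>0\}|=|\{\sG\zeta_n>\lambda_n x_2\}|$ via the far-field expansion
\[
\sG\zeta_n(x)=\frac{i_n\,x_2}{\pi|x|^2}+O\!\left(\|\zeta_n\|_1\,\operatorname{diam}(\operatorname{supp}\zeta_n)^2\,x_2/|x|^3\right)\qquad (|x|\to\infty),
\]
obtained by Taylor-expanding $G(x,\cdot)$ about $y=0$. Provided the leading dipole term dominates $\lambda_n x_2$ on a half-disc of radius $\sqrt{i_n/(\pi\lambda_n)}$, one concludes that $|\{\Psi_n>0\}|\geq i_n/(2\lambda_n)\to \infty$, which exceeds $L$ for large $n$ and contradicts $|\{\Psi_n>0\}|=\ell_n<L$.

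The principal obstacle is validating the dipole regime: Lemma~\ref{lmx6}(iv) bounds the $x_1$-extent of $\operatorname{supp}(\zeta_n)$ only by $a_n\lesssim(i_n/\lambda_n)^{2p}$, which, with $\lambda_n\sim(\log i_n)/i_n$, is far larger than the dipole radius $\sqrt{i_n/\lambda_n}$. Closing this gap will require a bootstrap: starting from the coarse lower bound $|\{\Psi_n>0\}|\geq (2E(\zeta_n)-\lambda_n i_n)/(\|\zeta_0\|_\infty\sup\Psi_n)$ together with the sharp estimate $\|\sG\zeta_n\|_\infty\lesssim\|\zeta_n\|_1\log i_n+O(1)$, one feeds the improved bound on $|\{\Psi_n>0\}|$ back into Lemma~\ref{lmx2} to tighten the support bound on $\zeta_n$, repeating until the dipole estimate applies in a region of measure at least $L$.
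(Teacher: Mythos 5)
Your overall strategy is in the right family---you correctly reduce the problem to showing that the set where $\sG\zeta - \lambda x_2 > 0$ eventually has measure exceeding $|\{\zeta_0>0\}|$, which is the same intermediate target the paper uses. However, your route to that lower bound on measure has two genuine gaps. First, the claimed upper bound $M(i)\leq C\log i$ does not follow by "evaluating Lemmas~\ref{lm1}, \ref{lm3} and~\ref{lmx1} at $x_2\sim Z(i)$": for $x_2\leq Z$ Lemma~\ref{lmx1} only gives $\sG\zeta(x)\lesssim Z$, so you would need $Z(i)=O(\log i)$, but the paper's Lemma~\ref{lm2} merely asserts existence of some $Z$ for each fixed impulse and its proof, tracing the choice of $Z$ through Lemma~\ref{lm1}, yields at best polynomial growth of $Z$ in $i$. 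Second, you acknowledge yourself that the dipole-dominated regime is not validated by the available support bound, and the proposed bootstrap is only gestured at; without carrying it out, the contradiction $|\{\Psi_n>0\}|\to\infty$ is not established.

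The paper avoids both difficulties by a cleaner device that you seem to have overlooked: it does not need any asymptotics for $M(i)$ nor $\lambda\to 0$ nor a far-field expansion. Using the identity from \cite[Lemma~9]{GRB:VP}, one gets $2E(\zeta)\geq \tfrac32\lambda I(\zeta)$, hence $\int\zeta(\sG\zeta-\lambda x_2)\geq \tfrac23 M_i$ and so $S:=\sup_\Pi(\sG\zeta-\lambda x_2)\geq 2M_i/(3\|\zeta_0\|_1)$, where only $M_i\to\infty$ (not a precise rate) is needed. Then the \emph{uniform Lipschitz bound} on $\sG\zeta$ from Lemma~\ref{lmx1}, combined with $\lambda>0$ and monotonicity of $-\lambda x_2$, shows $\sG\zeta-\lambda x_2$ stays positive on a half-disc of radius $\gtrsim S$ below the point $z$ where the supremum is attained, so $|\{\sG\zeta-\lambda x_2>0\}|\gtrsim M_i^2\to\infty$. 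The final step---that a strict curtailment could then be augmented by a rearrangement supported in $\{\sG\zeta-\lambda x_2>0,\;\zeta=0\}$, contradicting the first-order condition \eqref{eq6}---matches your "fills the super-level sets" observation, but the paper phrases it as an explicit swapping argument rather than relying on $\ell_n=|\{\Psi_n>0\}|$. If you replace your Steps 1--2 with the Lipschitz estimate from Lemma~\ref{lmx1} and the $\tfrac32\lambda I$ inequality, the argument closes without the dipole asymptotics and without any bootstrap.
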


\begin{proof}
We use the arguments from the corresponding part of the proof of
\cite[Theorem 16(ii)]{GRB:VP}.
Consider $i>0$, let $\Sigma_i$ denote the set of maximisers of $E$
relative to $\sW(\zeta_0, \leq i)$ and let $M_i$ denote the maximum value.
Consider $\zeta \in \Sigma_i$. 

Then, from Lemma \ref{lmx6}, $\zeta \in \sR\sC(\zeta_0)$ and there exist
$\lambda >0$ and an increasing function $\varphi$ such that
$\zeta(x) = \varphi (\sG\zeta(x) - \lambda x_2)$
except for a set of $x$ having measure zero.
There must be a number $\beta$ such that 
$\varphi(s)>0$ for $s>\beta$ and $\varphi(s)=0$ for $s<\beta$.
Moreover $\beta \geq 0$, for if $\beta<0$ then
$\sG\zeta(x)-\lambda x_2>\beta$ for almost all $x$ satisfying
$0<x_2<-\beta/\lambda$ whereas $\zeta$ vanishes outside a set of finite measure.

Now, from the definitions,
\[
\int \zeta(x) (\sG\zeta(x)-\lambda x_2) \rd x
= 2E(\zeta)-\lambda I(\zeta)
\]
and, by \cite[Lemma 9]{GRB:VP},
\[
2E(\zeta) = \int \zeta \sG\zeta
\geq \frac{3}{2} \lambda I(\zeta) + \beta \|\zeta\|_1
\geq \frac{3}{2} \lambda I(\zeta)
\]
so we obtain
\[
\int \zeta(x) (\sG\zeta(x)-\lambda x_2) \rd x
\geq \frac{2}{3}E(\zeta) = \frac{2}{3}M_i .
\]
Since $\|\zeta\|_1 \leq \|\zeta_0\|_1$ we deduce
\[
S := \sup \{ \sG\zeta(x) -\lambda x_2 \mid x \in \Pi \}
\geq \frac{2 M_i}{3\|\zeta_0\|_1} .
\]
Let $z$ be a point where $\sG\zeta(x)-\lambda x_2$ achieves its supremum.
Then, for $x$ with $x_2<z_2$ we may apply the mean value inequality
along the line segment $[z,x]$ and use Lemma \ref{lmx1} to obtain
\[
\sG\zeta(x) - \lambda x_2
\geq \sG\zeta(x) - \lambda z_2
\geq \sG\zeta(z) - \lambda z_2 - c_5(\|\zeta_0\|_1+\|\zeta_0\|_p)|x-z|
\]
and this is positive provided $|x-z|<S/(c_5(\|\zeta_0\|_1+\|\zeta_0\|_p))$,
for which it is sufficient that
$|z-x| \leq 2M_i/(3c_5\|\zeta_0\|_1(\|\zeta_0\|_1+\|\zeta_0\|_p))$.
Since $M_i \to \infty$ as $i \to \infty$
it follows that we can choose $i_1>0$ such that if $i>i_1$ then the area of
the set
$\{x \mid \sG\zeta(x)-\lambda x_2>0\}$ is greater than the area of the
set $\{ x \mid \zeta(x)>0 \}$.

If $i>i_1$ we claim that $\zeta \in \sR(\zeta_0)$.
Suppose not; then the supports of $\zeta^\Delta$ and $\zeta_0^\Delta$
would be intervals $[0,s]$ and $[0,t]$ respectively with $s<t$.
Then, for some $s<r<t$ the isomorphism construction described in
Section \ref{rearr} would yield a rearrangement $\eta$ of
$\zeta_0^\Delta 1_{[s,r]}$ on a subset of
$\{ x \mid \sG\zeta(x)-\lambda x_2>0, \, \zeta(x)=0\}$
and then we would have $\xi:=\zeta+\eta \in \sR\sC(\zeta_0)$ and
\[
\int \xi(x)(\sG\zeta(x)-\lambda x_2)
> \int \zeta(x)(\sG\zeta(x)-\lambda x_2) .
\]
From this it would follow that $E(\xi)>E(\zeta)$.
This would be impossible, so $\zeta \in \sR(\zeta_0)$ as claimed.
\end{proof}

\section{Proofs of the Compactness and Stability Theorems}
\label{S4}

\subsection{Proof of Theorem \ref{thm1}}
\label{proof1}

We note that $\Sigma_0$ is equal to the set of maximisers of $E$ relative
to $\sW(\zeta_0,\leq i_0)$, from Lemma \ref{lmx6}(i).
We write
\[
\beta = \int_\Pi \zeta_0 = \lim_{n \to \infty} \int_\Pi \zeta_n .
\]
By concentration-compactness \cite[Lemma I.1]{PLL:CC1}, we can replace
$(\zeta_n)$ by a subsequence having one of the following properties:\\
{\em Dichotomy:}
For each $n \in \NN$ there is a partition of $\Pi$ into measurable sets
$\Omega_n^1$, $\Omega_n^2$ and $\Omega_n^3$ in such a way that
$\zeta_n^k = \zeta_n 1_{\Omega_n^k}$, $k=1,2,3$, satisfy
\begin{eqnarray*}
\int_\Pi \zeta_n^1 \to \alpha,\\
\int_\Pi \zeta_n^2 \to \beta-\alpha,\\
\int_\Pi \zeta_n^3 \to 0,\\
\mbox{dist}(\Omega_n^1,\Omega_n^2) \to 0
\end{eqnarray*}
as $n \to \infty$, where $0< \alpha < \beta$.

\noindent
{\em Vanishing:}
\[
\forall R>0 \quad \lim_{n\to\infty} \sup_{y \in \Pi} \int_{D_\Pi(y,R)} \zeta_n
= 0;
\]

\noindent
{\em Compactness:}
there exists a sequence $(y^n)$ in $\overline{\Pi}$ such that
\[
\forall \varepsilon>0 \exists R>0 \mbox{ s.t. } \forall n \in \NN
\int_{D_\Pi(y^n,R)} \zeta_n > \beta - \varepsilon .
\]
We show that Dichotomy and Vanishing cannot occur and deduce the result
from Compactness.

\bigskip

\noindent
{\bf Excluding Dichotomy.}\\
We have 
\[
\int_\Pi \zeta_n^3 (\sG \zeta_n) \to 0 \mbox{ as } n \to \infty
\]
because $\sG \zeta_n$ is uniformly bounded by Lemmas \ref{lm1} and \ref{lmx1}
and $\| \zeta_n^3 \|_1 \to 0$.
Now
\[
E(\zeta_n) \geq E(\zeta_n^1+\zeta_n^2) = E(\zeta_n -\zeta_n^3)
 = E(\zeta_n) + E(\zeta_n^3) - \int_\Pi (\sG\zeta_n)\zeta_n^3
\geq E(\zeta_n) - \int_\Pi (\sG\zeta_n)\zeta_n^3 \to M_0
\]
and since $E(\zeta_n) \to M_0$ it follows that
\begin{equation}
E(\zeta_n^1 + \zeta_n^2) \to M_0.
\label{eqx10}
\end{equation}
We claim
\begin{equation}
\int_\Pi \zeta_n^1 \sG \zeta_n^2 \to 0.
\label{eqx1}
\end{equation}
To prove \eqref{eqx1} note firstly that, given $\varepsilon>0$, we can by 
Lemma \ref{lm1} choose $W>0$ independent of $n$ such that
$\sG\zeta_n^1(x) < \varepsilon$ and $\sG\zeta_n^2(x) < \varepsilon$
if $x_2>W$, thus
\begin{eqnarray*}
\int_{y_2>W} \zeta_n^2(y) \sG\zeta_n^1(y) \rd y 
&\leq& \varepsilon \| \zeta_n \|_1 , \\
\int_{x_2>W} \zeta_n^1(x) \sG\zeta_n^2(x)\rd x
&\leq& \varepsilon \| \zeta_n \|_1  .
\end{eqnarray*}
The remaining term in \eqref{eqx1} is
\[
\int_{x_2<W} \int_{y_2<W} G(x,y)\zeta_n^1(x) \zeta_n^2(y) \rd x\rd y
\leq \frac{W^2}{\pi\,\mathrm{dist}(\Omega_n^1,\Omega_n^2)^2} \|\zeta_n\|_1^2
\to 0 \mbox{ as } n \to \infty.
\]
Hence \eqref{eqx1}, which, together with \eqref{eqx10}, shows that 
\begin{equation*}
E(\zeta_n^1) + E(\zeta_n^2) \to M_0.
\end{equation*}

For $k=1,2$ let $\zeta_n^{k*}$ denote the Steiner symmetrisation of
$\zeta_n^k$ about the $x_2$-axis, so $E(\zeta_n^{k*}) \geq E(\zeta_n^k)$
and $I(\zeta_n^{k*}) = I(\zeta_n^k) = i_n^k$ say. 
We can pass to a subsequence and suppose that $\zeta_n^{k*} \to \zeta^k$ say,
weakly in $L^p$.
Now $E$ is continuous with respect to $L^p$ weak convergence of
Steiner-symmetric sequences when $\| \, \|_1$ and $I$ are bounded, by
Lemma \ref{lmx4}, so $E(\zeta^1)+E(\zeta^2) \geq M_0$, whereas
$I(\zeta^1)+I(\zeta^2) \leq i_0$ by weak lower semicontinuity of $I$
relative to non-negative functions in $L^p$.
The decreasing rearrangements $\zeta_n^{1\Delta}$ and $\zeta_n^{2\Delta}$
are both dominated by $\zeta_n^\Delta$ which converges in $L^1$ to
$\zeta_0^\Delta$, so a variant of
Helly's Selection Principle for monotonic functions (see \cite{KOLFOM})
shows that we can pass to a subsequence and suppose $\zeta_n^{1\Delta}$
and $\zeta_n^{2\nabla}$ converge pointwise, and then deduce that they
converge strongly in
$L^1$, to non-negative functions $\xi^1$ and $\xi^2$, say, dominated by
$\zeta_0^\Delta$ and $\zeta_0^\nabla$ respectively, thus the $\xi^k$
are supported on bounded intervals.

Since $\zeta_n^{k*}$ is a rearrangement of $\zeta_n^{k\Delta}$ we have
$\zeta_n^{k*} \preceq \zeta_n^{k\Delta}$ and since
the right-hand integral in \eqref{eqx8} is strongly continuous in
$L^1$ whereas the left-hand integral is weakly lower semicontinuous
in $L^p$, we deduce $\zeta^k \preceq \xi^k$.
On the other hand $\zeta_n^1 + \zeta_n^2 \preceq \zeta_0$ so
$\zeta_n^{1\Delta} + \zeta_n^{2\nabla} \preceq \zeta_0$, since the left-hand
integral of \eqref{eqx8} is additive over two functions that are simultaneously
positive almost nowhere, thus once more we can pass to the limit in
\eqref{eqx8} to obtain $\xi^1 + \xi^2 \preceq \zeta_0$.

Let $i^k = I(\zeta^k)$ for $k=1,2$ so $i^1+i^2 \leq i_0$ and let
$\widetilde{\xi}^k$ be a maximiser for $E$ relative to $\sW(\xi^k, \leq i^k)$,
so that $E(\widetilde{\xi}^k) \geq E(\zeta^k)$. 
Lemma \ref{lmx6} shows that the $\widetilde{\xi}^k$ exist and have
compact supports, say in a common rectangle $[-Q,Q] \times [0,Q]$.
Then define $\widehat{\xi}^1(x_1,x_2):=\widetilde{\xi}^1(x_1+Q,x_2)$ and
$\widehat{\xi}^2(x_1,x_2):=\widetilde{\xi}^2(x_1-Q,x_2)$, which are
simultaneously positive almost nowhere and satisfy
$\widehat{\xi}^k \preceq \xi^k$.
Again the additivity of the integrals in \eqref{eqx8} ensures that
$\widehat{\xi}^1 + \widehat{\xi}^2 \preceq \xi^1 + \xi^2 \preceq \zeta_0$ so
$\widehat{\xi}^1 + \widehat{\xi}^2 \in \sW(\zeta_0,\leq i_0)$.
Now
\begin{equation}
E(\widehat{\xi}^1 + \widehat{\xi}^2)
= E(\widehat{\xi}^1) + E(\widehat{\xi}^2)+ \int_\Pi \widehat{\xi}^1 \sG \widehat{\xi}^2
\geq E(\widehat{\xi}^1) + E(\widehat{\xi}^2) \geq M_0
\label{eqx5}
\end{equation}
proving that $\widehat{\xi}^1 + \widehat{\xi}^2$ is a maximiser for $E$ relative to
$\sW(\zeta_0, \leq i_0)$.
Since $\Sigma_0 \subset \sR(\zeta_0)$ by hypothesis, we now have
$\widehat{\xi}^1 + \widehat{\xi}^2 \in \sR(\zeta_0)$.

We have
\begin{eqnarray*}
\int_\Pi \widehat{\xi}^1 \leq \int_\Pi \xi^1
&\leq& \lim_{n\to\infty} \int_\Pi \zeta_n^1 = \alpha , \\
\int_\Pi \widehat{\xi}^2 \leq \int_\Pi \xi^2
&\leq& \lim_{n\to\infty} \int_\Pi \zeta_n^2 = \beta-\alpha .
\end{eqnarray*}
If $\widehat{\xi}^1  = 0$ or $\widehat{\xi}^2  = 0$ then 
\[
\int_\Pi \widehat{\xi}^1 + \widehat{\xi}^2 < \alpha + (\beta-\alpha) = \beta
= \int_\Pi \zeta_0
\]
contradicting $\widehat{\xi}^1 + \widehat{\xi}^2 \in \sR(\zeta_0)$.
Therefore $\widehat{\xi}^1$ and $\widehat{\xi}^2$ are both nonzero so
the first inequality of \eqref{eqx5} is strict, which is impossible.
Thus Dichotomy does not occur.

\bigskip

\noindent
{\bf Excluding Vanishing.}\\
Let $\varepsilon>0$ and choose by Lemma \ref{lm1} $W>0$ large enough that
$\sG\zeta_n(x_1,x_2)<\varepsilon$ for all $n$ if $x_2>W$.
We write
\begin{eqnarray*}
\Pi_W &=& \{ (x_1,x_2) \mid 0 < x_2 < W \} \\
\Pi^W &=& \{ (x_1,x_2) \mid  x_2 > W \} 
\end{eqnarray*}
and deduce that
\[
\int_\Pi \int_{\Pi^W} G(x,y) \zeta_n(x) \zeta_n(y) \rd x\rd y
\leq \varepsilon \| \zeta_n \|_1 .
\]
Then for $x \in \Pi_W$ and $R>0$, writing $\rho=|x-y|$,
\begin{eqnarray*}
\int_{\Pi_W \setminus D(x,R)} G(x,y)\zeta_n(y) \rd y
&\leq& 
\int_{\Pi_W \setminus D(x,R)} \frac{1}{4\pi}
\log \left( 1+ \frac{4x_2 y_2}{\rho^2} \right) \zeta_n(y) \rd y\\
&\leq&
\int_{\Pi_W \setminus D(x,R)} \frac{1}{4\pi}
\log \left( 1+ \frac{4x_2 (x_2+\rho)}{\rho^2} \right) \zeta_n(y) \rd y\\
&\leq&
\frac{1}{4\pi} \log \left( 1+ \frac{4W (W+R)}{R^2} \right) \| \zeta_n \|_1
\leq \varepsilon \| \zeta_n \|_1
\end{eqnarray*}
provided we choose $R$ suitably large, independently of $n$.
Again for $x \in \Pi_W$ and $R>0$ chosen as above we have
\begin{eqnarray*}
\int_{\Pi_W \cap D(x,R)} G(x,y)\zeta_n(y) \rd y
&\leq& 
\int_{\Pi_W \cap D(x,R)} 
\frac{1}{4\pi} \log \left( 1+ \frac{4W (W+R)}{\rho^2} \right) \zeta_n(y)
\rd y \\
&\leq&
\left( \int_{D(x,R)}
\left( \log \left( 1+ \frac{4W (W+R)}{\rho^2} \right) \right)^r \right)^{1/r}
\| \zeta_n \|_{L^1(D_\Pi(x,R))}^\theta \| \zeta_n \|_p^{1-\theta} ,
\end{eqnarray*}
where $1/r+1/s=1$, $1<s<p$ and $\theta + (1-\theta)/p = 1/s$, by H\"{o}lder's
inequality and the interpolation inequality.
Since $\| \zeta_n \|_{L^1(D_\Pi(x,R))} \to 0$ as $n \to \infty$ uniformly over
$x \in \Pi$ by assumption of Vanishing and $\| \zeta_n \|_p$ is bounded
we now have
\[
\int_{\Pi_W \cap D(x,R)} G(x,y)\zeta_n(y) \rd y
< \varepsilon
\]
for all sufficiently large $n$, uniformly over $x \in \Pi_W$.

Thus
\[
\int_{\Pi_W} G(x,y) \zeta_n(y) \rd y
< \varepsilon \| \zeta_n \|_1 + \varepsilon
\quad \mbox{ for all } x \in \Pi_W
\]
for all sufficiently large $n$ and therefore
\[
\int_{\Pi_W} \int_{\Pi_W} G(x,y)\zeta_n(y)\zeta_n(x)\rd y\rd x
< \varepsilon (\|\zeta_n\|_1^2 + \| \zeta_n \|_1)
\]
for all sufficiently large $n$.
Now
\begin{eqnarray*}
E(\zeta_n) &=& \left( \frac{1}{2}\int_{\Pi_W}\int_{\Pi_W}
+ \int_{\Pi_W}\int_{\Pi^W} + \frac{1}{2}\int_{\Pi^W}\int_{\Pi^W} \right)
G(x,y) \zeta_n(x) \zeta_n(y) \rd x\rd y\\
&\leq& \left( \frac{1}{2}\int_{\Pi_W}\int_{\Pi_W}
+ \int_\Pi\int_{\Pi^W} \right) G(x,y) \zeta_n(x) \zeta_n(y) \rd x\rd y
\leq  \varepsilon \left(\frac{1}{2} \| \zeta_n \|_1^2 + \frac{1}{2}  \| \zeta_n \|_1
+ \| \zeta_n \|_1 \right)
\end{eqnarray*}
for all sufficiently large $n$, hence $E(\zeta_n) \to 0$ as $n \to \infty$.
Since $M_0$ is positive, Vanishing cannot occur for a maximising sequence.

\bigskip

\noindent
{\bf Exploiting Compactness.}\\
If $y^n_2 \to \infty$ as $n \to \infty$ then, for each fixed $R>0$,
we would have
\[
\int_{D_\Pi(y^n,R)} \zeta_n \leq (y^n_2-R)^{-1}I(\zeta_n) \to 0
\]
as $n \to \infty$, contradicting the assumption of Compactness.
Therefore, after passing to a further subsequence if necessary, we can suppose
that $y^n_2 <W$ for all $n$, where $W>0$ is fixed.

The Compactness assumption ensures that
\begin{equation}
\sup_{n \in \NN} \int_{\Pi \setminus D(y^n,R)} \zeta_n \to 0 
\mbox{ as } R \to \infty .
\label{eq10}
\end{equation}
We have $D((y_1^n,y_2^n),R) \subset D((y_1^n,0),R+W)$ so there is no loss
in supposing that $y_2^n=0$ for all $n$.
Since $\sG \zeta_n$ is bounded in $L^\infty$ uniformly over $n$ we deduce
\begin{equation*}
\sup_{n \in \NN}
\int_{\Pi \setminus D(y^n,R)} \zeta_n\sG\zeta_n
\to 0 \mbox{ as } R \to \infty .
\end{equation*}
We write $\overline{\zeta}_n (x_1,x_2) := \zeta_n (x_1 - y^n_1,x_2)$.
It follows that
\begin{equation}
\sup_{n\in\NN} \int_{\Pi \setminus D(0,R)} \int_\Pi
G(x,y) \overline{\zeta}_n(x) \overline{\zeta}_n(y)\rd x \rd y
\to 0 \mbox{ as } R \to \infty .
\label{eq11}
\end{equation}

Now, for fixed $R>0$, $\sG$ followed by restriction to $D_\Pi(0,R)$ acts as a
compact operator from $L^p(D_\Pi(0,R))$ to $L^q(D_\Pi(0,R))$, where
$p^{-1}+q^{-1}=1$, and we can further pass to a subsequence to ensure
$\overline{\zeta}_n \to \overline{\zeta} \in \sW(\zeta_0,\leq i_0)$ say,
weakly in $L^p(\Pi)$.
Then, for each fixed $R>0$,
\begin{equation}
\int_{D_\Pi(0,R)} \int_{D_\Pi(0,R)} G(x,y)
\overline{\zeta}_n(x) \overline{\zeta}_n(y) \rd x\rd y  \to
\int_{D_\Pi(0,R)} \int_{D_\Pi(0,R)} G(x,y)
\overline{\zeta}(x) \overline{\zeta}(y) \rd x\rd y \mbox{ as } n \to \infty .
\label{eq12}
\end{equation}
From \eqref{eq11} and \eqref{eq12} we deduce
\[
E(\overline{\zeta}_n) \to E(\overline{\zeta}).
\]
Thus $\overline{\zeta}$ is a maximiser for $E$ relative to
$\sW(\zeta_0,\leq i_0)$.
Therefore $\overline{\zeta} \in \sR(\zeta_0)$ so
\[
\|\overline{\zeta}\|_p=\|\zeta_0\|_p=\lim_{n\to\infty}\|\overline{\zeta}_n\|_p
\]
hence by uniform convexity $\overline{\zeta}_n \to \overline{\zeta}$
strongly in $L^p(\Pi)$.

It follows that $\overline{\zeta}_n \to \overline{\zeta}$ strongly in
$L^1(D_\Pi(0,R))$ for each $R>0$.
In view of \eqref{eq10} it now follows that
$\overline{\zeta}_n \to \overline{\zeta}$
strongly in $L^1(\Pi)$.
Since $\overline{\zeta}$ is a maximiser we have $I(\overline{\zeta})=i_0$
so $I(\overline{\zeta}_n) \to I(\overline{\zeta})$.
\qedsymbol

\subsection{Proof of Theorem \ref{thm2}}
\label{proof2}
As previously stated, we view vorticity $\omega$ as a function of time $t$
taking values in $L^1(\Pi) \cap L^p(\Pi)$ and suppress the space variable $x$.

Suppose the result fails.
Then there exists $\varepsilon>0$ such that, for all sufficiently large
$n \in \NN$, we can choose a solution $\omega_n(\cdot)$ of the vorticity
equation and a time $t_n$, such that
$\mathrm{dist}_\Xp (\omega_n(0),\Sigma_0)<1/n$
but 
$\mathrm{dist}_\Xp (\omega_n(t_n),\Sigma_0) \geq \varepsilon$.

Observe that, by the conservation properties of the vorticity equation
\[
I(\omega_n(t_n))=I(\omega_n(0)) \to i_0,
\]
\[
\mathrm{dist}_1(\omega_n(t_n),\sR(\zeta_0))
= \mathrm{dist}_1(\omega_n(0),\sR(\zeta_0))
\leq \mathrm{dist}_1(\omega_n(0),\Sigma_0) \to 0,
\]
\[
\mathrm{dist}_p(\omega_n(t_n),\sR(\zeta_0))
= \mathrm{dist}_p(\omega_n(0),\sR(\zeta_0))
\leq \mathrm{dist}_p(\omega_n(0),\Sigma_0) \to 0
\]
as $n \to \infty$, hence using inequality \eqref{eq7},
\begin{eqnarray*}
\| \omega_n(t_n)^\Delta -\zeta_0^\Delta \|_p &\to& 0, \\
\| \omega_n(t_n)^\Delta -\zeta_0^\Delta \|_1 &\to& 0 .
\end{eqnarray*}
Moreover, using Lemma \ref{lmx5} in addition,
\[
E(\omega_n(t_n))=E(\omega_n(0)) \to M_0
\]
as $n \to \infty$.
Theorem \ref{thm1} now ensures that, after passing to a
subsequence, we can choose an $x_1$ translation $\xi_n$ of each
$\omega_n(t_n)$ such that the sequence $(\xi_n)$ converges to
an element $\xi_0$ of $\Sigma_0$ in $\| \,\|_1 + \| \,\|_p$.
Since $x_1$ translations preserve $\Sigma_0$ and $\| \, \|_\Xp$ we have
\[
\mathrm{dist}_\Xp(\omega_n(t_n),\Sigma_0)=
\mathrm{dist}_\Xp(\xi_n,\Sigma_0)
\leq \| \xi_n -\xi_0 \|_\Xp \to 0
\]
and this contradicts the choice of the $\omega_n$ and $t_n$, completing the
proof.
\qedsymbol

\bigskip

\noindent
{\bf Examples.}

Given arbitrary compactly supported nontrivial non-negative $\zeta_0$
in $L^p(\Pi)$ for some finite $p>2$, the hypotheses of Theorem 2 are
satisfied for all sufficiently large $i_0$, by Lemma \ref{lmx7}.

\bigskip
\noindent
{\bf\large Acknowledgement.}

The author thanks Helena and Milton Lopes for valuable conversations and he
is grateful for the hospitality and support of
the {\em Thematic Programme on Incompressible Fluid Dynamics}
held in 2014 at IMPA, Rio de Janeiro, 
where this research was substantially conducted.

\end{document}